\documentclass[oneside,a4paper,reqno]{amsart}
\usepackage{pdfsync, enumitem}
\usepackage{amsmath,bm}
\usepackage{mathtools}
\usepackage{graphicx}
\usepackage[all,cmtip]{xy}
\usepackage{tikz-cd}
\usepackage{fullpage}
\usepackage[margin=2.5cm, top=3cm, bottom=3cm, footskip=1cm]{geometry}

\usepackage{stmaryrd}
\usepackage{mathrsfs}
\usepackage{hyperref}
\usepackage{tikz-cd}
\usepackage{amssymb}
\usepackage{stackrel}
\usepackage{adjustbox}
\usepackage{multicol}
\usepackage{amsmath, amsthm, amscd, amssymb, latexsym, eucal}
\usepackage[all]{xy}
\def\serieslogo@{} \def\@setcopyright{} \makeatother

\usepackage[colorinlistoftodos]{todonotes}

\usepackage{hyperref}
\usepackage{color}
\usepackage{cite}
\usepackage{quiver}
\usepackage{marvosym}

\makeatletter
\renewcommand*\env@matrix[1][c]{\hskip -\arraycolsep
	\let\@ifnextchar\new@ifnextchar
	\array{*\c@MaxMatrixCols #1}}
\makeatother

\usepackage{color}

\numberwithin{equation}{section}
\newtheorem{thm}{Theorem}[section]
\newtheorem*{main-thm}{Theorem}
\newtheorem*{Auslander-thm}{Auslander's Theorem}

\newtheorem{cor}[thm]{Corollary}
\newtheorem{lem}[thm]{Lemma}
\newtheorem{prop}[thm]{Proposition}

\theoremstyle{definition}
\newtheorem{defn}[thm]{Definition}
\newtheorem{rem}[thm]{Remark}
\newtheorem{recollection}[thm]{Recollection}
\newtheorem{exmp}[thm]{Example}

\newtheorem*{ackn}{Acknowledgement}

\newtheorem*{example}{Example}
\newtheorem*{connot}{Notation and Conventions}

\newcommand\padova{%
\mathrel{\ooalign{\hss{\scalebox{0.5}{$\longleftrightarrow$}}\hss\cr%
\kern0.9ex\raise0.55ex\hbox{\scalebox{0.5}{$\boldsymbol{\bm{\vert}}$}}}}}
\newcommand\hfpadova{%
\mathrel{\ooalign{\hss{\scalebox{0.5}{$\longleftrightarrow$}}\hss\cr%
\kern0.9ex\raise0.55ex\hbox{\scalebox{0.5}{$\boldsymbol{\mathsf{h}\mathsf{f}\ \ }$}}}}}
\newcommand\rpadova{%
\mathrel{\ooalign{\hss{\scalebox{0.5}{$\longrightarrow$}}\hss\cr%
\kern0.9ex\raise0.55ex\hbox{\scalebox{0.5}{$\boldsymbol{\bm{\vert}}$}}}}}
\newcommand\lpadova{%
\mathrel{\ooalign{\hss{\scalebox{0.5}{$\longleftarrow$}}\hss\cr%
\kern0.9ex\raise0.55ex\hbox{\scalebox{0.5}{$\boldsymbol{\bm{\vert}}$}}}}}

\newcommand{\T}{\mathcal T}

\newcommand{\X}{\mathcal X}

\DeclareMathOperator{\pd}{\mathsf{pdim}}

\DeclareMathOperator*{\gd}{\mathsf{gl.dim}}

\DeclareMathOperator*{\Mod}{\mathsf{Mod}-\!}

\DeclareMathOperator*{\smod}{\mathsf{mod}-\!}

\DeclareMathOperator*{\Inj}{\mathsf{Inj}-\!}

\DeclareMathOperator*{\Proj}{\mathsf{Proj}-\!}

\DeclareMathOperator*{\Add}{\mathsf{Add}}

\DeclareMathOperator*{\add}{\mathsf{add}}

\DeclareMathOperator{\Hom}{\mathsf{Hom}}

\usepackage{stackengine}

\newsavebox{\proofbox}
\savebox{\proofbox}{\begin{picture}(7,7)%
	\put(0,0){\framebox(7,7){}}\end{picture}}

\usepackage{latexsym}
\usepackage{pstricks}
\usepackage{comment}

\usepackage[greek,english]{babel}

\begin{document}

\title{Global dimension of dg algebras via compact silting objects}

\author[Kostas]{Panagiotis Kostas}
\address{Department of Mathematics, Aristotle University of Thessaloniki, Thessaloniki 54124, Greece}
\email{pkostasg@math.auth.gr}

\subjclass[2020]{18G80, 16E35, 18G20, 16E10, 16E65}
\keywords{compactly generated triangulated category, silting object, global dimension, dg algebras}

\begin{abstract} 
    We introduce a notion of global dimension for a triangulated category relative to a compact silting object. We prove that the finiteness of this dimension is an intrinsic property of the triangulated category itself and, therefore, independent of the choice of the silting object. Focusing on the setup of connective differential graded (dg) algebras, we analyse the behaviour of global dimension under dg algebra homomorphisms and establish explicit bounds. This allows us to deduce a bound for the global dimension of certain dg quiver algebras. We also relate the regularity of the big singularity category of a proper connective dg algebra to the finiteness of its global dimension. 
\end{abstract}

\maketitle
\setcounter{tocdepth}{1}

\section{Introduction} The global dimension of a ring is an important invariant in representation theory. There have been various approaches to understanding global dimension in the context of triangulated and dg categories, most importantly through strong generation \cite{bondal van den bergh} and smoothness \cite{kontsevich}, respectively. The former is the topic of important recent work \cite{neeman} in algebraic geometry. It is known that the global dimension of rings is not preserved under derived equivalences (see \cite{fang_hu_koenig}), but its finiteness is, i.e.\ given a triangle equivalence $\mathsf{D}(R)\simeq \mathsf{D}(S)$ of derived categories of rings $R$ and $S$, then $\gd R<\infty$ if and only if $\gd S<\infty$. This can be understood via a very general construction from \cite{regular}; for a compactly generated triangulated category $\mathcal{T}$, we may consider the ``right far-away orthogonal'' of the compact objects $\mathcal{T}^{\mathsf{c}}$, in order to define $(\mathcal{T}^{\mathsf{c}})^{\padova}\eqqcolon \mathcal{T}^{\mathsf{b}}$, the \emph{bounded objects} in $\mathcal{T}$. Further, the ``left far-away orthogonal'' ${^{\padova}}(\mathcal{T}^{\mathsf{b}})\eqqcolon \mathcal{T}^{\mathsf{b}}_p$ defines the \emph{bounded projective objects} in $\mathcal{T}$, see Definition \ref{intrinsic subcategories} and \cite{regular} for details. The identification of the latter (intrinsic) subcategories is interpreted as finite global dimension for $\mathcal{T}$; in the case of $\mathsf{D}(R)$, the derived category of a ring, it means precisely that $\mathsf{D}^{\mathsf{b}}(\Mod R)=\mathsf{K}^{\mathsf{b}}(\Proj R)$, i.e.\ that $\gd R$ is finite. Since the global dimension of rings is not a derived invariant, this approach cannot be used in order to assign a specific number that measures the finiteness of global dimension for a triangulated category; some non-intrinsic choices have to be made. 

Our first aim in this paper is to define the global dimension of a compactly generated triangulated category $\mathcal{T}$ relative to a compact silting object (when it exists). The main result of this paper, proved in Theorem~\ref{global dim vs relative global dim}, reveals that the finiteness of this global dimension is equivalent to the inclusion $\mathcal{T}^{\mathsf{b}}\subseteq \mathcal{T}^{\mathsf{b}}_p$ and it is, therefore, independent of the chosen compact silting object. For a connective dg algebra $A$, the global dimension of its derived category $\mathsf{D}(A)$ relative to $A$ is commonly used \cite{minamoto,tomonaga} as the global dimension of $A$ (see also Remark \ref{remark} and Remark \ref{gldim vs minamoto gldim}). It follows, as a consequence of the theorem mentioned above, that the finiteness of global dimension of $A$ is a property intrinsic to $\mathsf{D}(A)$. We accompany this section with several other criteria for detecting finite global dimension (some of which are necessary in order to compare with \cite{minamoto,tomonaga}), see (\ref{implications}) for a summary.

In \cite[Proposition 5.9(iii)]{regular} it is shown that $\mathsf{K}_{\mathsf{ac}}(\Inj\Lambda)$, where $\Lambda$ is an Artin algebra, is regular (in the sense of \cite[Definition 5.7]{regular}) if and only if $\gd\Lambda<\infty$. In Section \ref{singularity} we prove the analogous result for proper connective dg algebras, see Corollary \ref{regularity for big singularity}.

Investigating the behaviour of the global dimension of a finite dimensional algebra in terms of its Gabriel quiver (and the relations) has been an active field of research, see the survey \cite{happel_zacharia}. In the dg world, much less seems to be known in this direction. We list some of these instances below.  
\begin{itemize}
    \item[$\bullet$] Smoothness for certain graded gentle algebras (and with trivial differential) was investigated in \cite[Subsection 3.4]{haiden_katzarkov_kontsevich} and \cite[Subsection 3.1]{lekili_polishchuk}. 
    \item[$\bullet$] Explicit smooth algebras were constructed in \cite[Subsection 4.3]{orlov}, using twisted tensor products. 
    \item[$\bullet$] The global dimension of a path algebra that is given a connective differential grading was investigated in \cite[Subsection 3.2]{tomonaga}. 
\end{itemize}

Motivated by the third item above, we study the global dimension of a bound quiver algebra $kQ/I$ that is given a connective dg algebra structure and for which $Q$ does not have oriented cycles. We prove that the latter is finite and bounded explicitly in terms of the maximal length of a path in the quiver, see Proposition \ref{acyclic quiver}. This is a consequence of our second main theorem, which is about homomorphisms of dg algebras and global dimension, inspired by results of Efimov--Orlov \cite{efimov_orlov}, see Theorem \ref{dg algebra homomorphism}.

\subsection*{Relation to Other Work} The notion of global dimension for triangulated categories relative to t-structures is not new. The definition we propose (Definition \ref{main definition}) is given having in mind the definition of global dimension for connective dg algebras after Minamoto \cite{minamoto}. A notion of projective dimension, in the context of a compactly generated triangulated category with a single compact generator, is given in \cite[Appendix B]{biswas_chen_manali-rahul_parker_zheng} and coincides with the one we propose (for the t-structure induced by the generator). This was used in \cite{gldim for approximable} to introduce a notion of global dimension for approximable triangulated categories and show that its finiteness (under a boundedness condition on the strong generator) is an intrinsic property of the triangulated category studied, see Remark \ref{relation to other work} for more details.

\begin{ackn}
    I thank an anonymous referee for valuable suggestions and comments and in particular for suggesting Remark \ref{koszul} and Corollary \ref{not reflexive}.
\end{ackn}

\begin{connot}
Given a subcategory $\mathcal{X}$ in a triangulated category $\T$ and an interval $I$ of integers, we consider the subcategories
$$\mathcal{X}^{\perp_{I}}\coloneqq\{y\in\T\colon \Hom_\T(\mathcal{X},y[i])=0,\forall i \in I\}$$
where $I$ is, sometimes, represented by symbols $>0$ or $\leq 0$ with the obvious meaning. The symbol $\perp$ stands simply for $\perp_0$. We denote by $\mathsf{Add}(\X)$ (resp. $\mathsf{add}(\X)$) the subcategory formed by the summands of the existing coproducts (resp. finite coproducts) in $\mathcal{T}$ of objects in $\X$. We use the term dg algebra for a differential graded algebra over -- unless otherwise stated -- a commutative ring $k$. By a module over a dg algebra we mean a right dg module. 
\end{connot}

\section{Global dimension for connective dg algebras} \label{section 2}

We recall that a \emph{t-structure} on a triangulated category $\mathcal{T}$ is a torsion pair $(\mathcal{U},\mathcal{V})$ on $\mathcal{T}$ (i.e.\ a pair of subcategories satisfying $\mathcal{U}={^{\perp}\mathcal{V}}$, $\mathcal{U}^{\perp}=\mathcal{V}$ and $\mathcal{T}=\mathcal{U}\ast\mathcal{V}$) such that $\mathcal{U}[1]\subseteq \mathcal{U}$. The \emph{heart} of this t-structure is defined as the intersection $\mathcal{H}\coloneqq \mathcal{U}\cap \mathcal{V}[1]$ (and it is an abelian category \cite{BBD}).
 
\begin{exmp} (\emph{Silting objects})
    An object $M$ of a (small) triangulated category $\mathcal{D}$ is called a \emph{silting generator} if $\mathsf{Hom}_{\mathcal{D}}(M,M[>\!0])=0$ and $\mathsf{thick}(M)=\mathcal{D}$. We say that an object $M$ of a compactly generated triangulated category $\mathcal{T}$ is a \emph{compact silting object} if it is a silting generator of $\mathcal{T}^{\mathsf{c}}$. We then know from \cite[Theorem 2.1]{hoshino_kato_miyachi} that the pair $(M^{\perp_{>0}},M^{\perp_{\leq 0 }})$ is a t-structure of $\mathcal{T}$ whose heart is $M^{\perp_{\neq 0}}\eqqcolon \mathcal{H}_{M}$.
\end{exmp}

We introduce a notion of projective dimension of objects in a triangulated category, relative to a heart of a given t-structure. Similar considerations have appeared in \cite[Appendix B]{biswas_chen_manali-rahul_parker_zheng} and in \cite{tomonaga}.

\begin{defn} \label{main definition}
    Let $\mathcal{T}$ be a triangulated category and $(\mathcal{U},\mathcal{V})$ a t-structure in $\mathcal{T}$ with heart $\mathcal{H}$.
    \begin{itemize}
        \item[$\bullet$] The \emph{projective dimension relative to} $\mathcal{H}$ of an object $x$ of $\mathcal{T}$, denoted by $\pd_{\mathcal{H}}x$, is defined to be the infimum $n\in\mathbb{Z}\cup\{-\infty,+\infty\}$ for which 
        \[
         \mathsf{Hom}_{\mathcal{T}}(x,y[>\!n])=0
         \]
         for all $y\in\mathcal{H}$. 
         \item[$\bullet$] The \emph{global dimension of $\mathcal{T}$ relative to $\mathcal{H}$}, denoted by $\gd{_{\mathcal{H}}}\mathcal{T}$, is defined to be the supremum among  $\pd_{\mathcal{H}}x$ for every object $x$ in the heart $\mathcal{H}$. 
    \end{itemize}
\end{defn}

The projective dimension of an object can in principle be negative, but it is always nonnegative for objects in the aisle, since $\mathsf{Hom}_{\mathcal{T}}(\mathcal{U},\mathcal{H}[<0])=0$. For $\mathcal{T}=\mathsf{D}(R)$, the derived category of a ring $R$, we have $\gd_{\mathcal{H}}\mathcal{T}=\gd R$, where $\mathcal{H}\simeq \Mod R$ is the heart of the canonical t-structure in $\mathsf{D}(R)$.

\begin{recollection} \label{recollection} 
    For the theory of dg categories and their derived categories we refer the reader to \cite{keller deriving}. By a dg algebra we mean a dg category with a single object. 
    Let now $A$ be a connective dg algebra over a commutative ring $k$ (i.e.\ $H^n(A)=0$ for all $n>0$). Then $A$ is a silting object in $\mathcal{T}=\mathsf{D}(A)$ and therefore it induces a t-structure $(A^{\perp_{>0}},A^{\perp_{\leq 0}})$, where explicitly 
    \[
    A^{\perp_{>0}}=\{x\in\mathsf{D}(A): H^n(x)=0 \text{ for }n>0\}\eqqcolon \mathsf{D}^{\leq 0}(A)\]
    and
    \[
    A^{\perp_{\leq 0}}=\{x\in\mathsf{D}(A): H^n(x)=0 \text{ for }n\leq 0\}\eqqcolon \mathsf{D}^{>0}(A).
    \]
    There is an equivalence $\mathcal{H}\simeq \Mod H^0(A)$, where $\mathcal{H}$ is the heart of the t-structure above; explicitly we may assume that $A^n=0$ for $n>0$ and then, there is a canonical morphism of dg algebras $A\rightarrow H^0(A)$ which induces a functor $\mathsf{D}(H^0(A))\rightarrow \mathsf{D}(A)$. This functor restricts to an equivalence $\Mod H^0(A)\rightarrow\mathcal{H}$; see \cite[Section 2.1]{amiot} for details. 
\end{recollection}
 
The following example of relative global dimension is the main object of study in this paper. 

\begin{defn} \label{examples} 
The \emph{global dimension} of a connective dg algebra $A$ is defined to be the global dimension of $\mathsf{D}(A)$ relative to $\mathcal{H}\simeq \Mod H^0(A)$. For an object $x$ of $\mathsf{D}(A)$, we simply denote $\pd_{\mathcal{H}}x$ by $\pd_Ax$.
\end{defn}

\begin{rem}
    We will later observe, see Remark \ref{gldim vs minamoto gldim}, that the definition of global dimension for connective dg algebras, as in Definition \ref{examples}, agrees with the definition given in \cite[Theorem 5.1]{minamoto}. Note, however, that the two notions of projective dimension used here and in \cite{minamoto} differ; for a connective dg algebra $A$ and any integer $n$, we have $\pd_AA[n]=n$ while using the definition from \cite{minamoto, yekutieli duality}, the dg module $A[n]$ has projective dimension zero. 
\end{rem}

We now observe the following property of projective dimension. 

\begin{lem} \label{bound of projective dimensions in a triangle}
    Let $\mathcal{T}$ be a triangulated category and $(\mathcal{U},\mathcal{V})$ a t-structure in $\mathcal{T}$ with heart $\mathcal{H}$. Then for any triangle $x\rightarrow y\rightarrow z\rightarrow x[1]$ of $\mathcal{T}$, the inequality $\pd_{\mathcal{H}}y\leq \mathsf{max}\{\pd_{\mathcal{H}}x,\pd_{\mathcal{H}}z\}$ holds. 
\end{lem}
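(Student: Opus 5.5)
The plan is to apply the cohomological functor $\Hom_{\mathcal{T}}(-,w[i])$ to the triangle, for a fixed $w\in\mathcal{H}$, and read off the vanishing from the resulting long exact sequence. Set $n\coloneqq \mathsf{max}\{\pd_{\mathcal{H}}x,\pd_{\mathcal{H}}z\}\in\mathbb{Z}\cup\{-\infty,+\infty\}$. If $n=+\infty$ there is nothing to prove, so assume $n<+\infty$. It suffices to show that $\Hom_{\mathcal{T}}(y,w[i])=0$ for every integer $i>n$ and every $w\in\mathcal{H}$. By the definition of $\pd_{\mathcal{H}}y$ as an infimum, this gives $\pd_{\mathcal{H}}y\leq n$. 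When $n=-\infty$, the condition ``$i>n$'' just means every integer $i$.

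Apply $\Hom_{\mathcal{T}}(-,w[i])$ to the triangle $x\rightarrow y\rightarrow z\rightarrow x[1]$. This yields the exact sequence
\[
\Hom_{\mathcal{T}}(z,w[i])\longrightarrow \Hom_{\mathcal{T}}(y,w[i])\longrightarrow \Hom_{\mathcal{T}}(x,w[i]).
\]
Fix $i>n$. Then $i>\pd_{\mathcal{H}}z$, so the left-hand term vanishes. We need one small observation here: the vanishing set is upward closed. By definition, $\pd_{\mathcal{H}}z\le m$ for some $m<i$ with $\Hom(z,w[>m])=0$, or the infimum is $-\infty$, in which case vanishing holds in all degrees. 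Likewise $i>\pd_{\mathcal{H}}x$, so the right-hand term vanishes. Exactness in the middle then forces $\Hom_{\mathcal{T}}(y,w[i])=0$.

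The only point needing care is this bookkeeping with the infimum. Saying $\pd_{\mathcal{H}}x<i$ means there is some $m<i$ with $\Hom(x,\mathcal{H}[>m])=0$, and hence in particular $\Hom(x,\mathcal{H}[i])=0$. This handles both the finite and the $-\infty$ cases uniformly. Beyond that the argument is just exactness of a long exact sequence, and I expect no real obstacle.
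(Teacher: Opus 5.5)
Your proposal is correct and is essentially the paper's argument: the paper also applies $\mathsf{Hom}_{\mathcal{T}}(-,h)$ to the triangle for $h\in\mathcal{H}$ and reads off the vanishing from the resulting exact sequence. Your treatment of the infimum, including the $-\infty$ case, just spells out what the paper leaves implicit.
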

\begin{proof}
    This follows immediately by applying $\mathsf{Hom}_{\mathcal{T}}(-,h)$ to the given triangle, for any $h\in\mathcal{H}$. 
\end{proof}

We recall a few definitions from \cite{regular}; for a triangulated category $\mathcal{T}$ and any collection of objects $\mathcal{X}$ of $\mathcal{T}$, we define the following subcategories 
\[
{^{\perp_{\gg}}}{\mathcal{X}}\coloneqq\{t\in\mathcal{T}:\forall x\in\mathcal{X}, \  \mathsf{Hom}_{\mathcal{T}}(t,x[n])=0 \text{ for }n\gg 0\}
\]
\[
{^{\perp_{\ll}}}{\mathcal{X}}\coloneqq\{t\in\mathcal{T}:\forall x\in\mathcal{X}, \  \mathsf{Hom}_{\mathcal{T}}(t,x[n])=0 \text{ for }n\ll 0\}
\]
and furthermore we set ${^{\padova}}\mathcal{X}={^{\perp_{\gg}}}{\mathcal{X}}\cap {^{\perp_{\ll}}}{\mathcal{X}}$. Similarly we define such ``orthogonals" from the right; we only need one of them, namely
\[
\mathcal{X}^{\padova}\coloneqq\{t\in\mathcal{T}:\forall x\in\mathcal{X}, \  \mathsf{Hom}_{\mathcal{T}}(x,t[n])=0 \text{ for }|n|\gg 0\}.
\]
The above are thick triangulated subcategories of $\mathcal{T}$, see \cite[Lemma 2.9]{regular}. 

\begin{defn} (\!\!\cite{regular}) \label{intrinsic subcategories}
    For any compactly generated triangulated category $\mathcal{T}$, we consider the following triangulated subcategories:
    \begin{itemize}
        \item[$\bullet$]  the subcategory of \emph{bounded objects} $\mathcal{T}^{\mathsf{b}}\coloneqq (\mathcal{T}^{\mathsf{c}})^{\padova}$; 
        \item[$\bullet$] the subcategory of \emph{bounded projective objects} $\mathcal{T}^{\mathsf{b}}_p\coloneqq {^{\padova}}(\mathcal{T}^{\mathsf{b}})$;
        \item[$\bullet$] the subcategory of \emph{bounded injective objects} $\mathcal{T}^{\mathsf{b}}_i\coloneqq (\mathcal{T}^{\mathsf{b}})^{\padova}$. 
    \end{itemize}
    We say that $\mathcal{T}$ has \emph{finite global dimension} if $\mathcal{T}^{\mathsf{b}}=\mathcal{T}^{\mathsf{b}}_p$ (or equivalently $\mathcal{T}^{\mathsf{b}}=\mathcal{T}^{\mathsf{b}}_i$, see \cite[Lemma 4.1]{regular}). 
\end{defn}

\begin{example}
    For a dg algebra $A$ and $\mathcal{T}=\mathsf{D}(A)$, it holds that $\mathcal{T}^{\mathsf{b}}=\mathsf{D}^{\mathsf{b}}(A)$, i.e.\ consists of the modules with bounded cohomology. If $A$ is connective and has bounded cohomology, then $\mathcal{T}^{\mathsf{b}}_p=\mathsf{thick}(\mathsf{Add}A)$ and $\mathcal{T}^{\mathsf{b}}_i=\mathsf{thick}(\mathsf{Prod}A^*)$, see \cite[Corollary 3.16]{regular}.
\end{example}

We will now relate the two concepts of finite global dimension that have appeared when $\mathcal{T}$ has a compact silting object, for which we need the following description of the bounded objects in this case.

\begin{prop}  \label{bounded t structure} \textnormal{(\!\!\cite[Proposition 3.8]{regular})}
    Let $\mathcal{T}$ be a compactly generated triangulated category and $M$ a compact silting object. The t-structure $(M^{\perp_{>0}},M^{\perp_{\leq 0}})$ restricts to a t-structure in $\mathcal{T}^{\mathsf{b}}$ and it holds that
    \[
    \mathcal{T}^{\mathsf{b}}=(\cup_{s\in\mathbb{Z}}(M^{\perp_{>s}}\cap\mathcal{T}^{\mathsf{b}}))\cap (\cup_{t\in\mathbb{Z}}(M^{\perp_{\leq t}}\cap\mathcal{T}^{\mathsf{b}}))
    \]
    i.e.\ the restricted t-structure is a bounded t-structure of $\mathcal{T}^{\mathsf{b}}$. In particular, $\mathcal{T}^{\mathsf{b}}=\mathsf{thick}(\mathcal{H}_M)$.
\end{prop}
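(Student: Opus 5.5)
The plan is to compare the condition $\mathsf{Hom}(M,t[n])=0$ for $|n|\gg 0$ with the truncation degrees of $t$ for the t-structure $(M^{\perp_{>0}},M^{\perp_{\leq 0}})$. The whole proof rests on one observation, which we extend from $M$ to all compacts using $\mathsf{thick}(M)=\mathcal{T}^{\mathsf{c}}$.

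\textbf{Reduction to $M$.} First we replace $\mathcal{T}^{\mathsf{c}}$ by $M$. The class of compact objects $c$ with $\mathsf{Hom}(c,t[n])=0$ for $|n|\gg0$ is closed under shifts, cones and summands. Since $\mathsf{thick}(M)=\mathcal{T}^{\mathsf{c}}$, it follows that
\[
\mathcal{T}^{\mathsf{b}}=\{t:\mathsf{Hom}(M,t[n])=0\text{ for }|n|\gg0\}.
\]

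\textbf{Main observation.} An object $t$ lies in $M^{\perp_{>s}}\cap M^{\perp_{\leq t'}}$ for some $s,t'$ exactly when $\mathsf{Hom}(M,t[n])=0$ for $n>s$ and for $n\leq t'$. This is just the definition: $M^{\perp_{>s}}$ means $\mathsf{Hom}(M,t[n])=0$ for all $n>s$, and likewise for $M^{\perp_{\leq t'}}$. Hence $\mathcal{T}^{\mathsf{b}}=\bigcup_{s}M^{\perp_{>s}}\cap\bigcup_{t'}M^{\perp_{\leq t'}}$, which gives the displayed equality.

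\textbf{Restriction of the t-structure.} We must show that $\mathcal{T}^{\mathsf{b}}$ is closed under the truncation triangles $u\to t\to v\to u[1]$ with $u\in M^{\perp_{>0}}$ and $v\in M^{\perp_{\leq0}}$. Apply $\mathsf{Hom}(M,-[n])$ to the triangle. For $n\leq0$, $v$ contributes nothing, so $\mathsf{Hom}(M,u[n])\cong\mathsf{Hom}(M,t[n])$; this also uses $\mathsf{Hom}(M,v[n-1])=0$, which holds since $n-1\leq 0$. For $n>0$, $u$ contributes nothing, so $\mathsf{Hom}(M,v[n])\cong \mathsf{Hom}(M,t[n])$, using $\mathsf{Hom}(M,u[n+1])=0$. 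Consequently $u$ and $v$ have vanishing Homs from $M$ in large $|n|$ whenever $t$ does, and so $u,v\in\mathcal{T}^{\mathsf{b}}$. Therefore the t-structure restricts to $\mathcal{T}^{\mathsf{b}}$, and boundedness of the restriction is the displayed equality.

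\textbf{$\mathcal{T}^{\mathsf{b}}=\mathsf{thick}(\mathcal{H}_M)$.} Since $\mathcal{H}_M=M^{\perp_{\neq0}}\subseteq\mathcal{T}^{\mathsf{b}}$ and $\mathcal{T}^{\mathsf{b}}$ is thick, we get $\mathsf{thick}(\mathcal{H}_M)\subseteq\mathcal{T}^{\mathsf{b}}$. For the reverse inclusion, an object of $\mathcal{T}^{\mathsf{b}}$ lies in $M^{\perp_{>s}}\cap M^{\perp_{\leq t'}}$ for some $s,t'$. We induct on $s-t'$, peeling off one cohomology piece at a time with the shifted truncation triangles; each piece is a shift of an object of $\mathcal{H}_M$. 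The only point needing care is checking that the shifted truncations land in the right orthogonals, which is the same Hom long exact sequence argument as in the previous step. This is the standard fact that a bounded t-structure generates its category from the heart via finite filtrations.
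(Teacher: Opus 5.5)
The paper gives no proof of this statement; it imports it directly from \cite[Proposition 3.8]{regular}, so there is no internal argument to compare against. Your proof is correct and self-contained. The steps all check out:
\begin{itemize}
\item Reducing from $\mathcal{T}^{\mathsf{c}}$ to $M$ works: the objects $c$ with $\mathsf{Hom}(c,t[n])=0$ for $|n|\gg0$ form a thick subcategory, and $\mathsf{thick}(M)=\mathcal{T}^{\mathsf{c}}$.
\item The identification $\mathcal{T}^{\mathsf{b}}=\bigcup_s M^{\perp_{>s}}\cap\bigcup_{t'}M^{\perp_{\leq t'}}$ gives boundedness. To read it as such, note $M^{\perp_{>0}}[k]=M^{\perp_{>-k}}$ and $M^{\perp_{\leq 0}}[k]=M^{\perp_{\leq -k}}$, so these are exactly the shifted aisles and coaisles.
\item The long exact sequence argument that both truncations of a bounded object are bounded is right.
\end{itemize}

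One point is left implicit in the last step. In the induction on $s-t'$ for $\mathcal{T}^{\mathsf{b}}\subseteq\mathsf{thick}(\mathcal{H}_M)$, the base case $s\leq t'$ requires $t=0$. This holds because $t[s]\in M^{\perp_{>0}}\cap M^{\perp_{\leq 0}}=\mathcal{U}\cap\mathcal{V}=0$. Equivalently, $M$ generates $\mathcal{T}$ because it thickly generates $\mathcal{T}^{\mathsf{c}}$. This is the only place where generation enters. The silting vanishing $\mathsf{Hom}(M,M[>0])=0$ is used only through the existence of the t-structure \cite{hoshino_kato_miyachi}. Once stated, your appeal to the standard fact that a bounded t-structure generates its category from shifts of the heart is fully justified.
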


\begin{thm} \label{global dim vs relative global dim}
    Let $\mathcal{T}$ be a compactly generated triangulated category that admits a compact silting object $M$. The following are equivalent. 
    \begin{itemize}
        \item[\textnormal{(i)}] $\gd{_{\mathcal{H}_M}}\mathcal{T}<\infty$. 
        \item[\textnormal{(ii)}] $\mathcal{T}^{\mathsf{b}}\subseteq \mathcal{T}^{\mathsf{b}}_p$.
        \item[\textnormal{(iii)}] $\mathcal{T}^{\mathsf{b}}\subseteq \mathcal{T}^{\mathsf{b}}_i$.
    \end{itemize}
    In particular when $M\in M^{\padova}$, then $\gd{_{\mathcal{H}_M}}\mathcal{T}$ is finite if and only if $\mathcal{T}$ has finite global dimension. 
\end{thm}
\begin{proof}
      (i)$\implies$(ii): Assume that $\gd{_{\mathcal{H}_M}}\mathcal{T}<\infty$. Then, by the definition of the global dimension, it follows that $\mathcal{H}_M\subseteq {^{\perp_{\gg}}}\mathcal{H}_M$. Moreover, we have $\mathsf{Hom}_{\mathcal{T}}(M^{\perp_{>0}},\mathcal{H}_{M}[<0])=0$, since $\mathcal{H}_{M}[<0]$ is contained in the coaisle $M^{\perp_{\leq 0}}$. In particular, it follows that $\mathcal{H}_{M}\subseteq {^{\perp_{\ll}}}\mathcal{H}_M$ and therefore we can conclude from the above that $\mathcal{H}_M\subseteq {^{\padova}}\mathcal{H}_M$. By considering thick closures inside $\mathcal{T}$, we get an inclusion 
       \[
       \mathsf{thick}(\mathcal{H}_{M})\subseteq {^{\padova}}\mathcal{H}_M,
       \]
       since the right-hand side is already thick. By Proposition \ref{bounded t structure}, it follows that the left-hand side is $\mathcal{T}^{\mathsf{b}}$ and, as a consequence of \cite[Lemma 2.9(v)(b)]{regular}, the right-hand side is $\mathcal{T}^{\mathsf{b}}_p$. 

       (ii)$\implies$(i): Assume now that the inclusion $\mathcal{T}^{\mathsf{b}}\subseteq \mathcal{T}^{\mathsf{b}}_p$ holds for $\mathcal{T}$. From the following equalities
        \[
        \mathcal{T}^{\mathsf{b}}_p={^{\padova}}(\mathcal{T}^{\mathsf{b}})={^{\padova}}(\mathsf{thick}(\mathcal{H}_{M}))={^{\padova}}\mathcal{H}_M, 
        \]
        together with $\mathcal{T}^{\mathsf{b}}=\mathsf{thick}(\mathcal{H}_M)$, it follows that $\mathcal{H}_{M}\subseteq {^{\padova}}\mathcal{H}_M$. We claim that there is an integer $d$ such that $\mathsf{Hom}_{\mathcal{T}}(x,y[>\!d])=0$ for all $x,y\in\mathcal{H}_{M}$. Indeed, if we assume there is no such number, then we can find objects $x_n,y_n\in\mathcal{H}_M$ and a strictly increasing sequence of integers $d_n$ for which 
        \[
        \mathsf{Hom}_{\mathcal{T}}(x_n,y_n[d_n])\neq 0.
        \]
        We may then consider $x=\oplus x_n$, $y=\oplus y_n$, which both belong to $\mathcal{H}_M$, and we have 
        \[
        \mathsf{Hom}_{\mathcal{T}}(x,y[d_n])\neq 0
        \]
        for all $n$, which contradicts $\mathcal{H}_{M}\subseteq {^{\padova}}\mathcal{H}_M$.

        (ii)$\iff$(iii): If $\mathcal{T}^{\mathsf{b}}\subseteq \mathcal{T}^{\mathsf{b}}_p$, then it follows that $(\mathcal{T}^{\mathsf{b}})^{\padova}\supseteq (\mathcal{T}^{\mathsf{b}}_p)^{\padova}$ and by \cite[Lemma 3.2]{regular} we know that $(\mathcal{T}^{\mathsf{b}}_p)^{\padova}=\mathcal{T}^{\mathsf{b}}$. If $\mathcal{T}^{\mathsf{b}}\subseteq \mathcal{T}^{\mathsf{b}}_i$, then it follows that ${^{\padova}}(\mathcal{T}^{\mathsf{b}})\supseteq {^{\padova}}(\mathcal{T}^{\mathsf{b}}_i)$ and by \emph{loc. cit.} we know that ${^{\padova}}(\mathcal{T}^{\mathsf{b}}_i)=\mathcal{T}^{\mathsf{b}}$.

        When $M\in M^{\padova}$, then the inclusion $\mathcal{T}^{\mathsf{b}}_p\subseteq \mathcal{T}^{\mathsf{b}}$ holds (see \cite[Lemma 3.12]{regular}), implying the last claim. 
\end{proof}

The above shows, in particular, that for a connective dg algebra $A$, the finiteness of global dimension is a property intrinsic to $\mathsf{D}(A)$ and has the following direct consequence.

\begin{cor} \label{derived invariance of global dimension}
    Assume a triangle equivalence $\mathsf{D}(A)\simeq \mathsf{D}(B)$ of derived categories of connective dg algebras $A$ and $B$. Then $\gd A<\infty$ if and only if $\gd B<\infty$. 
\end{cor}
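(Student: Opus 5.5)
The plan is to reduce the statement to Theorem~\ref{global dim vs relative global dim}, applied separately to $\mathcal{T}=\mathsf{D}(A)$ with $M=A$ and to $\mathcal{T}'=\mathsf{D}(B)$ with $M=B$. By Recollection~\ref{recollection}, $A$ is a compact silting object of $\mathsf{D}(A)$, and the heart $\mathcal{H}_A=A^{\perp_{\neq 0}}$ is the heart $\mathcal{H}\simeq\Mod H^0(A)$ used in Definition~\ref{examples}. So $\gd A=\gd_{\mathcal{H}_A}\mathsf{D}(A)$. By the equivalence (i)$\iff$(ii) of Theorem~\ref{global dim vs relative global dim}, $\gd A<\infty$ holds if and only if $\mathsf{D}(A)^{\mathsf{b}}\subseteq\mathsf{D}(A)^{\mathsf{b}}_p$. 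The same argument gives the corresponding statement for $B$.

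It then remains to show that the condition $\mathcal{T}^{\mathsf{b}}\subseteq\mathcal{T}^{\mathsf{b}}_p$ is invariant under triangle equivalences $F\colon\mathsf{D}(A)\xrightarrow{\sim}\mathsf{D}(B)$. I would argue this in three steps.

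\textbf{Compact objects.} $F$ preserves coproducts, since an equivalence preserves all existing colimits. Hence $F$ restricts to an equivalence $\mathsf{D}(A)^{\mathsf{c}}\simeq\mathsf{D}(B)^{\mathsf{c}}$, because compactness is defined purely in terms of $\Hom$ and coproducts.

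\textbf{Far-away orthogonals.} $F$ commutes with shifts up to natural isomorphism and induces bijections on $\Hom$-sets. It therefore carries $\mathcal{X}^{\padova}$ onto $F(\mathcal{X})^{\padova}$, and ${^{\padova}}\mathcal{X}$ onto ${^{\padova}}F(\mathcal{X})$, after closing under isomorphism. Applying this first to $\mathcal{X}=\mathsf{D}(A)^{\mathsf{c}}$ and then to $\mathcal{X}=\mathsf{D}(A)^{\mathsf{b}}$ gives $F(\mathsf{D}(A)^{\mathsf{b}})=\mathsf{D}(B)^{\mathsf{b}}$ and $F(\mathsf{D}(A)^{\mathsf{b}}_p)=\mathsf{D}(B)^{\mathsf{b}}_p$.

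\textbf{Conclusion.} The inclusion $\mathsf{D}(A)^{\mathsf{b}}\subseteq\mathsf{D}(A)^{\mathsf{b}}_p$ is therefore equivalent to $\mathsf{D}(B)^{\mathsf{b}}\subseteq\mathsf{D}(B)^{\mathsf{b}}_p$, which proves the corollary.

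There is no serious obstacle. The only point needing care is the remark that the subcategories of Definition~\ref{intrinsic subcategories} are defined intrinsically, using only the triangulated structure together with coproducts. Any triangle equivalence respects this data, which is exactly why the argument avoids choosing $F(A)$ as a silting object. Note that $F(A)$ need not be isomorphic to $B$, so one cannot compare the two relative global dimensions directly, only their finiteness.
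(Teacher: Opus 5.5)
Your proposal is correct and follows the paper's argument: the corollary comes from the equivalence (i)$\iff$(ii) of Theorem~\ref{global dim vs relative global dim}, applied with $M=A$ and $M=B$, combined with the fact that $\mathcal{T}^{\mathsf{b}}$ and $\mathcal{T}^{\mathsf{b}}_p$ are defined intrinsically and are therefore preserved by any triangle equivalence. Your explicit check of that invariance (coproducts, then compact objects, then far-away orthogonals) spells out what the paper leaves implicit in the single word ``intrinsic''.
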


\begin{rem} \label{relation to other work}
    The second part of Theorem \ref{global dim vs relative global dim} (which applies to connective dg algebras with finitely many nonzero cohomologies) was obtained independently in \cite{gldim for approximable}, where a notion of global dimension is introduced for a triangulated category $\mathcal{S}$ with a single compact generator $G$. By \cite[Lemma 5.2(1)]{gldim for approximable}, this global dimension is the same as the global dimension of $\mathcal{S}$ relative to the heart of the t-structure induced by $G$ (see \cite[Lemma 2.7]{gldim for approximable}), in the sense of Definition \ref{main definition}. Moreover, it follows by combining \cite[Lemma 5.2(2)]{gldim for approximable} together with \cite[Proposition 3.4]{gldim for approximable} (see also \cite[Remark 5.6]{gldim for approximable}) that when $\mathcal{S}$ is weakly approximable \cite{neeman7} (which is always satisfied when $G$ is silting) and $G$ is bounded (i.e.\ $G\in G^{\padova}$), then the global dimension of $\mathcal{S}$ relative to $G$ is finite if and only if $\mathcal{S}^{\mathsf{b}}=\mathcal{S}^{\mathsf{b}}_p$.
\end{rem}

\begin{lem} \label{lemma}
    Let $(\mathcal{U},\mathcal{V})$ be a t-structure in a triangulated category $\mathcal{T}$ with heart $\mathcal{H}$. Assume the existence of an object $s$ in $\mathcal{H}$ such that every object in the heart admits a finite filtration in $\mathcal{H}$ with quotients in the subcategory $\Add s$. Then $\gd{_{\mathcal{H}}}\mathcal{T}=\pd{_{\mathcal{H}}}s$.
\end{lem}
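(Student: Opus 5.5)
Since $s$ itself lies in $\mathcal{H}$, the inequality $\pd_{\mathcal{H}} s\leq \gd_{\mathcal{H}}\mathcal{T}$ holds by the definition of global dimension as a supremum over objects of the heart. The content of the lemma is the reverse inequality: $\pd_{\mathcal{H}} x\leq \pd_{\mathcal{H}} s$ for every $x\in\mathcal{H}$. I plan to prove it in two steps, first for objects of $\Add s$ and then for all objects of the heart, using their finite filtrations.

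First, let $x$ be a summand of a coproduct $\coprod_{i\in I} s$ that exists in $\mathcal{T}$. For $y\in\mathcal{H}$ and any $n$ we have
\[
\mathsf{Hom}_{\mathcal{T}}\bigl(\textstyle\coprod_{i\in I} s,y[n]\bigr)\cong\prod_{i\in I}\mathsf{Hom}_{\mathcal{T}}(s,y[n]),
\]
which vanishes whenever $n>\pd_{\mathcal{H}}s$. Vanishing passes to direct summands, so $\pd_{\mathcal{H}}x\leq\pd_{\mathcal{H}}s$ for every $x\in\Add s$.

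Next, take $x\in\mathcal{H}$ with a filtration $0=x_0\subseteq x_1\subseteq\cdots\subseteq x_m=x$ in $\mathcal{H}$ whose quotients $x_j/x_{j-1}$ lie in $\Add s$. I argue by induction on $j$. Short exact sequences in the heart of a t-structure come from triangles in $\mathcal{T}$, so each step gives a triangle
\[
x_{j-1}\rightarrow x_j\rightarrow x_j/x_{j-1}\rightarrow x_{j-1}[1].
\]
By Lemma \ref{bound of projective dimensions in a triangle} we get $\pd_{\mathcal{H}}x_j\leq\max\{\pd_{\mathcal{H}}x_{j-1},\pd_{\mathcal{H}}(x_j/x_{j-1})\}$. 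The first step bounds the second term by $\pd_{\mathcal{H}}s$, and the induction hypothesis bounds the first. Hence $\pd_{\mathcal{H}}x\leq\pd_{\mathcal{H}}s$, and taking the supremum over $x\in\mathcal{H}$ gives $\gd_{\mathcal{H}}\mathcal{T}\leq\pd_{\mathcal{H}}s$.

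The degenerate cases need a brief check. If $s=0$, then every object of $\mathcal{H}$ is filtered by zero objects and so is itself zero, and both sides equal $-\infty$. If $\pd_{\mathcal{H}}s=+\infty$, the upper bound is trivial. The only point requiring any care is the identification of short exact sequences in $\mathcal{H}$ with triangles, which is standard from \cite{BBD}; I do not expect a genuine obstacle.
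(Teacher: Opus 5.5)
Your proof is correct and follows essentially the same route as the paper: the easy inequality holds because $s\in\mathcal{H}$, and the reverse one comes from an induction along the finite filtration. That induction uses that short exact sequences in $\mathcal{H}$ give triangles in $\mathcal{T}$, applies Lemma~\ref{bound of projective dimensions in a triangle}, and handles the $\Add s$ base case via $\mathsf{Hom}_{\mathcal{T}}(\coprod s,y[n])\cong\prod\mathsf{Hom}_{\mathcal{T}}(s,y[n])$. The only differences are cosmetic: you build the filtration up from the bottom rather than splitting off the top quotient, and you spell out the $\Add s$ step and the degenerate cases, which the paper leaves implicit.
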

\begin{proof}
By definition we have that $\pd_{\mathcal{H}}s\leq \gd_{\mathcal{H}}\mathcal{T}$. For the opposite inequality, if $\pd_{\mathcal{H}}s$ were infinite, then there is nothing to prove. We assume therefore that $\pd_{\mathcal{H}}s=n<\infty$ and let $x,y$ be objects in $\mathcal{H}$. By assumption, $x$ admits a filtration with quotients being coproducts of $s$. We will show by induction on the length of this filtration that $\mathsf{Hom}_{\mathcal{T}}(x,y[>\!n])=0$. If $x\in\mathsf{Add}(s)$ (i.e.\ the length of the filtration is zero), then the assertion is clear. If $x$ admits a filtration of length $l$, we may find a triangle 
\[
x_1\rightarrow x\rightarrow s_1\rightarrow  x_1[1]
\]
where $s_1\in\mathsf{Add}(s)$ and $x_1$ admits a filtration of length $l-1$ by objects of $\mathcal{H}$ with quotients coproducts of $s$ (the way we obtain this triangle is by considering first the short exact sequence $0\rightarrow x_1\rightarrow x\rightarrow s_1\rightarrow 0$ in $\mathcal{H}$ and then lifting it to a triangle in $\mathcal{T}$ via the embedding $\mathcal{H}\rightarrow \mathcal{T}$). We have $\pd_{\mathcal{H}}s_1\leq \pd_{\mathcal{H}}s$ and $\pd_{\mathcal{H}}x_1\leq \pd_{\mathcal{H}}s$ by induction, therefore the result follows from Lemma \ref{bound of projective dimensions in a triangle}. 
\end{proof}

\begin{lem} \label{lemma'}
    Assume the setup of Lemma \ref{lemma} and consider an object $x$ that lies in the heart $\mathcal{H}$. If $\mathsf{Hom}_{\mathcal{T}}(x,(-)[j])\colon \mathcal{H}\rightarrow \mathsf{Ab}$ commutes with coproducts for every 
    $j\in\mathbb{Z}$, then 
    \[
    \pd_{\mathcal{H}}x=\mathsf{min}\{n\geq 0: \mathsf{Hom}_{\mathcal{T}}(x,s[>\!n])=0\}.
    \] 
\end{lem}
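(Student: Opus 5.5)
Set $m\coloneqq\mathsf{min}\{n\geq 0: \mathsf{Hom}_{\mathcal{T}}(x,s[>\!n])=0\}$, with the convention that $m=+\infty$ if no such $n$ exists. I prove $\pd_{\mathcal{H}}x\leq m$ and $m\leq \pd_{\mathcal{H}}x$ separately. First observe that $\pd_{\mathcal{H}}x\geq 0$ whenever $x\neq 0$, because $\mathsf{Hom}_{\mathcal{T}}(x,x)\neq 0$. For $x=0$ both sides are degenerate and can be handled directly. For the inequality $m\leq\pd_{\mathcal{H}}x$: since $s\in\mathcal{H}$, the definition gives $\mathsf{Hom}_{\mathcal{T}}(x,s[>\!\pd_{\mathcal{H}}x])=0$. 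Because $\pd_{\mathcal{H}}x\geq 0$, the integer $\pd_{\mathcal{H}}x$ is one of the candidates in the minimum, so $m\leq \pd_{\mathcal{H}}x$.

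For the reverse inequality, assume $m<\infty$ and let $y\in\mathcal{H}$. I must show $\mathsf{Hom}_{\mathcal{T}}(x,y[j])=0$ for all $j>m$. I induct on the length of a filtration of $y$ in $\mathcal{H}$ whose quotients lie in $\Add s$; this is the dual of the argument in Lemma \ref{lemma}, now filtering the second variable instead of the first.

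In the base case $y\in\Add s$, write $y$ as a summand of a coproduct $s^{(I)}$. The coproduct in $\mathcal{H}$ is the coproduct in $\mathcal{T}$, since it is formed inside the heart. By hypothesis, $\mathsf{Hom}_{\mathcal{T}}(x,s^{(I)}[j])\cong \mathsf{Hom}_{\mathcal{T}}(x,s[j])^{(I)}$, and this vanishes for $j>m$. Passing to summands gives $\mathsf{Hom}_{\mathcal{T}}(x,y[j])=0$.

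For the inductive step, take a short exact sequence $0\to y_1\to y\to s_1\to 0$ in $\mathcal{H}$ with $s_1\in\Add s$ and $y_1$ having a shorter filtration. Lift it to a triangle $y_1\to y\to s_1\to y_1[1]$ in $\mathcal{T}$. Applying $\mathsf{Hom}_{\mathcal{T}}(x,(-)[j])$ gives an exact sequence
\[
\mathsf{Hom}_{\mathcal{T}}(x,y_1[j])\to\mathsf{Hom}_{\mathcal{T}}(x,y[j])\to\mathsf{Hom}_{\mathcal{T}}(x,s_1[j]).
\]
For $j>m$ the outer terms vanish, the left by induction and the right by the base case, so the middle term vanishes. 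Hence $\pd_{\mathcal{H}}x\leq m$.

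The main point needing care is the base case. The commutation hypothesis is phrased for coproducts in $\mathcal{H}$, so I have to check that the coproduct in $\mathcal{H}$ agrees with the one in $\mathcal{T}$ (or simply interpret the hypothesis as applying to the coproduct in $\mathcal{T}$ of copies of $s$), and also that summands are handled correctly.
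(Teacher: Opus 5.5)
Your proof is correct and follows the paper's argument. The inequality $m\le\pd_{\mathcal{H}}x$ comes directly from $s\in\mathcal{H}$, and for the converse you use the coproduct hypothesis to get vanishing for $y\in\Add s$ and then induct on a filtration of the second variable $y$, which is what the paper means by ``as in Lemma \ref{lemma}''. One small correction: for $x=0$ the two sides are not equal under the literal definition ($\pd_{\mathcal{H}}0=-\infty$ while the minimum is $0$), so that case is a harmless exception to the statement rather than something that can be ``handled directly''.
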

\begin{proof}
    One inequality is clear. For the converse, simply observe that if $\mathsf{Hom}_{\mathcal{T}}(x,s[>\!n])$ vanishes, then $\mathsf{Hom}_{\mathcal{T}}(x,y[>\!n])=0$ for every object $y$ that belongs to $\Add s$, by using the assumption on $x$. The rest of the proof follows inductively, as in Lemma \ref{lemma}. 
\end{proof}

We will also need the following variant.

\begin{lem} \label{lemma''}
    Let $(\mathcal{U},\mathcal{V})$ be a t-structure in a triangulated category $\mathcal{D}$ with heart $\mathcal{H}$. Assume the existence of an object $s$ in $\mathcal{H}$ such that every object in the heart admits a finite filtration in $\mathcal{H}$ with quotients in the subcategory $\add s$. Then $\gd{_{\mathcal{H}}}\mathcal{D}=\pd{_{\mathcal{H}}}s=\mathsf{min}\{n\geq 0: \mathsf{Hom}_{\mathcal{D}}(s,s[>\!n])=0\}$.
\end{lem}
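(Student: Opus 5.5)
The first equality $\gd_{\mathcal{H}}\mathcal{D}=\pd_{\mathcal{H}}s$ follows the proof of Lemma \ref{lemma} verbatim. Trivially $\pd_{\mathcal{H}}s\leq\gd_{\mathcal{H}}\mathcal{D}$. For the reverse inequality, assume $\pd_{\mathcal{H}}s=n<\infty$ and fix $x\in\mathcal{H}$. Lift each short exact sequence $0\to x_1\to x\to s_1\to 0$ in $\mathcal{H}$, with $s_1\in\add s$, to a triangle in $\mathcal{D}$. Induct on the length of the filtration, using Lemma \ref{bound of projective dimensions in a triangle}. Since $\pd_{\mathcal{H}}$ of a finite direct sum of summands of copies of $s$ is at most $\pd_{\mathcal{H}}s$, the base case is clear. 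This part needs no coproducts, which is precisely why the hypothesis is stated with $\add$ rather than $\Add$, so that $\mathcal{D}$ may be a small triangulated category.

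For the second equality, set $m\coloneqq\mathsf{min}\{n\geq 0:\Hom_{\mathcal{D}}(s,s[>\!n])=0\}$, with $m=\infty$ if no such $n$ exists. Since $s\in\mathcal{H}$, the definition gives $\Hom_{\mathcal{D}}(s,s[>\!\pd_{\mathcal{H}}s])=0$. As remarked after Definition \ref{main definition}, $\pd_{\mathcal{H}}s\geq 0$ because $s\in\mathcal{U}$. Hence $m\leq\pd_{\mathcal{H}}s$.

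For $\pd_{\mathcal{H}}s\leq m$, suppose $m<\infty$. We must show $\Hom_{\mathcal{D}}(s,y[>\!m])=0$ for every $y\in\mathcal{H}$. Here the plan is to induct on the filtration of the \emph{second} variable $y$, mirroring Lemma \ref{lemma'}, but now without any coproduct-commutation hypothesis. Finite sums and summands suffice: $\Hom_{\mathcal{D}}(s,-[j])$ is additive, so vanishing on $s$ gives vanishing on all of $\add s$. For a filtration of length $l$, take the triangle $y_1\to y\to s_1\to y_1[1]$ with $s_1\in\add s$ and $y_1$ having a filtration of length $l-1$. Applying $\Hom_{\mathcal{D}}(s,-[j])$ with $j>m$ yields an exact sequence $\Hom(s,y_1[j])\to\Hom(s,y[j])\to\Hom(s,s_1[j])$. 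Both outer terms vanish, the first by induction, so the middle term vanishes as well.

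The only potential obstacle is keeping track of which variable carries the filtration. Lemma \ref{lemma} filters the first argument, whereas here we filter the second one, and both inductions are needed: the first yields $\gd_{\mathcal{H}}\mathcal{D}=\pd_{\mathcal{H}}s$, and the second identifies $\pd_{\mathcal{H}}s$ with $m$. Otherwise each step is routine.
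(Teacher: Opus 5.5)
Your proposal is correct and is the argument the paper has in mind. The paper's proof just says to repeat Lemma~\ref{lemma} (induction on a filtration of the first variable) for $\gd_{\mathcal{H}}\mathcal{D}=\pd_{\mathcal{H}}s$, and Lemma~\ref{lemma'} (induction on a filtration of the second variable) for the last equality, with additivity over $\add s$ replacing the coproduct hypothesis — exactly as you do. The only caveat, inherited from the paper's remark after Definition~\ref{main definition}, is that $\pd_{\mathcal{H}}s\geq 0$ really uses $s\neq 0$ (via $\mathrm{id}_s\neq 0$): if $s=0$ then $\mathcal{H}=0$, and the first two quantities equal $-\infty$ while the minimum is $0$.
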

\begin{proof}
    This is similar to the proofs of Lemma \ref{lemma} and Lemma \ref{lemma'} (and in fact easier as we don't have to deal with coproducts).
\end{proof}

We deduce the following description of the global dimension of a dg algebra. For an ordinary algebra this is a classical result, see \cite[Corollary 11]{auslander}.

\begin{prop} \label{global dimension via simples}
    Let $A$ be a connective dg algebra over a field $k$ that is locally finite, i.e.\ $H^n(A)$ is finite dimensional for every $n$. Consider $s=H^0(A)/\mathsf{rad}H^0(A)$. Then it holds that 
    \[
    \gd A=\pd_As=\mathsf{min}\{n\geq 0:\mathsf{Hom}_{\mathsf{D}(A)}(s,s[>n])=0\}=\gd{_{\smod H^0(A)}}\mathsf{D}_{\mathsf{fd}}(A),
    \]
    where $\mathsf{D}_{\mathsf{fd}}(A)=\{x\in\mathsf{D}(A):\oplus_{n\in\mathbb{Z}}H^n(x)\in\smod k\}$.
\end{prop}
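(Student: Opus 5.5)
We prove the chain of equalities by applying Lemma \ref{lemma}, Lemma \ref{lemma'} and Lemma \ref{lemma''} to the t-structure $(\mathsf{D}^{\leq 0}(A),\mathsf{D}^{>0}(A))$ on $\mathsf{D}(A)$, with heart $\mathcal{H}\simeq\Mod H^0(A)$ as in Recollection \ref{recollection}, and to its restriction to $\mathsf{D}_{\mathsf{fd}}(A)$.

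Since $A$ is locally finite, $\Lambda\coloneqq H^0(A)$ is a finite dimensional $k$-algebra, so $\mathsf{rad}\Lambda$ is nilpotent. For every $\Lambda$-module $x$, the radical filtration
\[
x\supseteq x\,\mathsf{rad}\Lambda\supseteq x\,(\mathsf{rad}\Lambda)^2\supseteq\cdots\supseteq 0
\]
is finite, and its subquotients are semisimple $\Lambda$-modules. Every semisimple $\Lambda$-module is a coproduct of simples, and each simple is a summand of $s$, so these subquotients lie in $\Add s$. When $x$ is finite dimensional they lie in $\add s$. Transporting this along $\Mod\Lambda\simeq\mathcal{H}$, Lemma \ref{lemma} gives $\gd A=\pd_A s$. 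By definition $\pd_A s=\min\{n\geq 0:\mathsf{Hom}_{\mathsf{D}(A)}(s,y[>n])=0\ \forall y\in\mathcal{H}\}$, where we use $n\geq 0$ because $s$ lies in the aisle.

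For the second equality we use Lemma \ref{lemma'} with $x=s$. We must check that $\mathsf{Hom}_{\mathsf{D}(A)}(s,(-)[j])$ commutes with coproducts on $\mathcal{H}$. This is the step needing care, since $s$ is not compact in $\mathsf{D}(A)$ in general, only finite dimensional. We proceed as follows.

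1. Take a semifree resolution $P\to s$ built from cells $A[m]$ with $m\geq 0$, so $P$ lives in degrees $\leq 0$. Because each $H^n(A)$ is finite dimensional and $s$ is finite dimensional, we can choose $P$ so that each stage uses only finitely many copies of each shift $A[m]$. This is the standard minimal-resolution argument, killing cohomology degree by degree.

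2. For $y\in\mathcal{H}$ (so $y$ is concentrated in degree $0$) and fixed $j$, the group $\mathsf{Hom}(P,y[j])$ only sees cells $A[m]$ with $m$ in a bounded range near $j$. More precisely, it depends only on a finite truncation/skeleton of $P$, which is compact.

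3. Hence $\mathsf{Hom}(s,-[j])$ agrees on $\mathcal{H}$ with $\mathsf{Hom}$ out of a compact object, and therefore commutes with coproducts in $\mathcal{H}$.

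4. Lemma \ref{lemma'} then yields $\pd_As=\min\{n\geq0:\mathsf{Hom}(s,s[>n])=0\}$.

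An alternative route to the same conclusion: $\mathsf{Hom}(s,\bigoplus y_i[j])$ can be computed via the $t$-truncation $\tau^{\geq -j-1}P$. That truncation is a finite extension of finitely generated $\Lambda$-modules placed in finitely many degrees, hence its $\mathsf{Hom}$ into objects of $\mathcal{H}$ commutes with coproducts.

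For the last equality, the t-structure restricts to $\mathsf{D}_{\mathsf{fd}}(A)$ with heart $\smod\Lambda$, since truncations of objects with finite dimensional total cohomology stay in $\mathsf{D}_{\mathsf{fd}}(A)$. Every object of $\smod\Lambda$ has a finite filtration with quotients in $\add s$, so Lemma \ref{lemma''} applied in $\mathcal{D}=\mathsf{D}_{\mathsf{fd}}(A)$ gives
\[
\gd{_{\smod\Lambda}}\mathsf{D}_{\mathsf{fd}}(A)=\min\{n\geq0:\mathsf{Hom}(s,s[>n])=0\}.
\]
Here $\mathsf{Hom}$ is computed in the full subcategory, so it coincides with $\mathsf{Hom}$ in $\mathsf{D}(A)$. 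This matches the middle expression and completes the chain.

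The main obstacle is the coproduct-compatibility in step 1–3. The subtle point is that finiteness of $P$ in each degree uses local finiteness of $A$ essentially. The remaining steps are direct applications of the lemmas above.
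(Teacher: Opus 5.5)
Your proof is correct. For the first and last equalities it matches the paper: the radical filtration with Lemma~\ref{lemma} gives $\gd A=\pd_A s$, and the restricted t-structure on $\mathsf{D}_{\mathsf{fd}}(A)$, with heart $\smod H^0(A)$, together with Lemma~\ref{lemma''} gives the last one.

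The middle equality is where you genuinely differ. The paper first observes $\gd_{\smod H^0(A)}\mathsf{D}_{\mathsf{fd}}(A)\le \gd A$. For the reverse inequality it assumes the left side is finite and invokes \cite[Proposition 3.4]{tomonaga} to get $\mathsf{D}_{\mathsf{fd}}(A)\subseteq\mathsf{per}(A)$. Then $s$ is compact and Lemma~\ref{lemma'} applies.

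You instead verify the hypothesis of Lemma~\ref{lemma'} directly, without knowing that $s$ is compact. You build compact objects $P_m$ with triangles $P_{m-1}\to P_m\to A[m]^{\oplus n_m}\to P_{m-1}[1]$ such that $\mathsf{cone}(P_m\to s)\in\mathsf{D}^{\le -m-1}(A)$. Local finiteness is what makes this possible: each successive cone has finite dimensional cohomology in every degree, so finitely many copies of $A[m]$ kill its top cohomology. Then $\Hom(s,y[j])\cong\Hom(P_m,y[j])$ naturally in $y\in\mathcal{H}$ once $m\ge j+1$, because the cone is invisible to both $\Hom(-,y[j])$ and $\Hom(-,y[j+1])$.

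The two routes buy different things:
\begin{itemize}
\item Yours is self-contained and needs no finite/infinite case split. It also proves more: $\Hom(x,(-)[j])$ commutes with coproducts on $\mathcal{H}$ for any $x$ that is bounded above with finite dimensional cohomology in each degree.
\item The paper's is shorter given the external result. It also records along the way that finiteness forces $s\in\mathsf{per}(A)$, which it uses in Remark~\ref{remark} and in (\ref{implications}).
\end{itemize}
You should state step 2 in the precise form above. Vague phrases like ``only sees cells in a bounded range'' need to be replaced by the fact that the cone of $P_m\to s$ lies in $\mathsf{D}^{\le -m-1}(A)$.

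Drop your ``alternative route''. It is circular: since $P\simeq s$ lies in the heart, the t-truncation $\tau^{\ge -j-1}P$ is just $s$ again. The claim that maps from a finite extension of finitely generated $\Lambda$-modules into $\mathcal{H}$ commute with coproducts is exactly what you are trying to prove, because such modules are not compact in $\mathsf{D}(A)$. The argument only works with the cellular skeleta $P_m$, which is your main argument.
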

\begin{proof}
    The t-structure $(A^{\perp_{>0}},A^{\perp_{\leq 0}})$ of $\mathsf{D}(A)$ restricts to a t-structure of $\mathsf{D}_{\mathsf{fd}}(A)$ with heart $\smod H^0(A)$, see \cite[Proposition 2.1(b)]{kalck_yang}. As a consequence, since $H^0(A)\in\smod k$, it follows that  $s=H^0(A)/\mathsf{rad}H^0(A)$ satisfies the assumptions of Lemma \ref{lemma''} (because every finitely generated module has a composition series) and so the last equality in the claim holds. For the same reason as before, every $H^0(A)$-module admits a radical series, i.e.\ a filtration with quotients being coproducts of $H^0(A)/\mathsf{rad}H^0(A)$; therefore Lemma \ref{lemma} applies and shows the first equality. For the remaining assertions, we first observe that, by definition, the inequality $\gd_{\smod H^0(A)}\mathsf{D}_{\mathsf{fd}}(A)\leq \gd A$ holds. It therefore remains to show the converse, for which we may assume that $\gd_{\smod H^0(A)}\mathsf{D}_{\mathsf{fd}}(A)$ is finite. It then follows from \cite[Proposition 3.4]{tomonaga} that $\mathsf{D}_{\mathsf{fd}}(A)\subseteq \mathsf{per}(A)$ and in particular $s$ is a compact object of $\mathsf{D}(A)$. As a consequence, Lemma \ref{lemma'} applies to show the middle claimed equality. 
\end{proof}

\begin{rem} \label{remark}
    Tomonaga \cite{tomonaga} defines the global dimension of a connective locally finite dg algebra by 
    \[
    \gd{_{\smod H^0(A)}}\mathsf{D}_{\mathsf{fd}}(A).
    \]
    The above shows that the two notions of global dimension agree. As already mentioned in the previous proof, the latter is finite if and only if $\mathsf{D}_{\mathsf{fd}}(A)\subseteq \mathsf{per}(A)$, which is equivalent to $H^0(A)/\mathsf{rad}H^0(A)\in\mathsf{per}(A)$.
\end{rem}

\begin{rem} \label{koszul}
    Following \cite{fushimi,keller deriving}, under the setup of Proposition \ref{global dimension via simples}, the dg algebra $A^!=\mathsf{REnd}_A(s)$ is the \emph{Koszul dual} of $A$. Proposition \ref{global dimension via simples} therefore tells us that the global dimension of $A$ is equal to the top nonzero degree of the cohomology of $A^!$ (note that this is a coconnective dg algebra, i.e.\ $H^{<0}(A^!)=0$).
\end{rem}

The following characterisation of finite projective dimension is also useful. 
\begin{prop} \label{characterisation of projective dimension}
    Let $\mathcal{T}$ be a compactly generated triangulated category and $M$ a compact silting object of $\mathcal{T}$ that belongs to $\mathcal{T}^{\mathsf{b}}$. The following are equivalent for an object $x$ that lies in $ \mathcal{T}^{\mathsf{b}}\cap M^{\perp_{>0}}$. 
    \begin{itemize}
    \item[\textnormal{(i)}] $x\in \Add M\ast\Add M[1]\ast \cdots\ast\Add M[d]$.
    \item[\textnormal{(ii)}] $\pd{_{\mathcal{H}_M}}x\leq d $.
    \end{itemize}
\end{prop}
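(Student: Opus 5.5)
(i)$\implies$(ii) is the easy direction and I would do it first. Since $M$ is compact, $\Hom_{\mathcal{T}}(M,-)$ commutes with coproducts. So for every $y\in\mathcal{H}_M$ and $0\le j\le d$ we get $\Hom_{\mathcal{T}}(\Add M[j],y[>\!d])=0$. This holds because $y\in M^{\perp_{\neq 0}}$ and $\Hom(M[j],y[n])=\Hom(M,y[n-j])$ with $n-j>0$. Lemma \ref{bound of projective dimensions in a triangle}, applied repeatedly along the extensions defining $\Add M\ast\cdots\ast\Add M[d]$, then gives $\pd_{\mathcal{H}_M}x\le d$.

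For (ii)$\implies$(i) I would argue by induction on $d$, peeling off $\Add M$-approximations.

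\emph{Base case.} Take $d=0$ with $x\in M^{\perp_{>0}}\cap\mathcal{T}^{\mathsf{b}}$ and $\pd x\le 0$.

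\emph{Approximation triangle.} Let $M_0=M^{(I)}$ with $I=\Hom(M,x)$, and complete the canonical map to a triangle
\[
M_0\xrightarrow{f} x\to x'\to M_0[1].
\]
By construction $\Hom(M,f)$ is surjective. Since $M_0,x\in M^{\perp_{>0}}$, the long exact sequence shows $x'\in M^{\perp_{>0}}$. It also shows $\Hom(M,x'[-1])$ is the cokernel-free part, i.e.\ $x'[-1]$ sits in $M^{\perp_{>1}}$. More precisely, $\Hom(M,x'[n])=0$ for $n\ge 0$ except possibly $n=0$, and in fact $\Hom(M,x')=0$ by surjectivity together with $\Hom(M,M_0[1])=0$. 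Hence $x'\in M^{\perp_{\ge 0}}=M^{\perp_{>0}}[1]$, so $x'=z[1]$ with $z\in M^{\perp_{>0}}$. Moreover $z\in\mathcal{T}^{\mathsf{b}}$, because $M\in\mathcal{T}^{\mathsf{b}}$ and $\mathcal{T}^{\mathsf{b}}$ is closed under coproducts of this type. This is the delicate point: I would use that the t-structure restricts to $\mathcal{T}^{\mathsf{b}}$ (Proposition \ref{bounded t structure}), that $\Add M\subseteq M^{\perp_{>0}}$, and that boundedness above for $M^{(I)}$ follows from compactness of the objects of $\mathcal{T}^{\mathsf{c}}=\mathsf{thick}(M)$ checked against $M$.

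\emph{Dimension drop.} The key claim is $\pd z\le d-1$ when $d\ge1$, while for $d=0$ the triangle splits. Applying $\Hom(-,y)$ for $y\in\mathcal{H}_M$ gives
\[
\Hom(M_0,y[n-1])\to\Hom(z,y[n-1])\to\Hom(x,y[n]).
\]
For $n-1>0$ the left term vanishes, so $\Hom(z,y[>\!d-1])$ vanishes once $d-1\ge 0$ and $n>d$. This gives $\pd z\le d-1$, and induction yields $z\in\Add M\ast\cdots\ast\Add M[d-1]$. Hence $x\in\Add M\ast z[1]\subseteq\Add M\ast\cdots\ast\Add M[d]$.

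\emph{Main obstacle: the base case $d=0$.} Here I need $x\in\Add M$. I would show $\pd z\le -1$ for $z\in M^{\perp_{>0}}\cap\mathcal{T}^{\mathsf{b}}$, and then that this forces $z=0$. Indeed, a nonzero bounded object of the aisle has a nonzero top cohomology $H^{-m}_M(z)\in\mathcal{H}_M$ with $m\ge0$. The truncation map $z\to H^{-m}_M(z)[m]$ is nonzero, so $\Hom(z,y[m])\neq0$ for some $y\in\mathcal{H}_M$, contradicting $\pd z<0\le m$. This uses the boundedness from Proposition \ref{bounded t structure} essentially: without $x\in\mathcal{T}^{\mathsf{b}}$ no top cohomology need exist. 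Once $z=0$, we get $x\cong M_0\in\Add M$.
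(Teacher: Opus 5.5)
Your approximation triangle, your check that $z\in\mathcal{T}^{\mathsf{b}}\cap M^{\perp_{>0}}$, and your inductive step for $d\ge 1$ match the paper's proof (your $z$ is the paper's $x'$). The gap is in the base case $d=0$. There you claim $\pd_{\mathcal{H}_M}z\le -1$, and from it $z=0$ and $x\cong M_0$; all three are false in general. Your exact sequence only gives $\Hom(z,y[m])=0$ for $m\ge 1$. In degree $0$ it reads $\Hom(M_0,y)\to\Hom(z,y)\to\Hom(x,y[1])=0$, which makes $\Hom(z,y)$ a quotient of $\Hom(M_0,y)$, not zero. For a concrete counterexample, take $\mathcal{T}=\mathsf{D}(k)$ with $k$ a field and $M=x=k$. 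Then $M_0=k^{(k)}$, and the canonical map $M_0\to x$ is surjective with nonzero kernel $z\in\mathcal{H}_M$, so $\pd_{\mathcal{H}_M} z=0$ and $x\not\cong M_0$. More generally, a summand of $M$ that is not a coproduct of copies of $M$ (say $k\times 0$ for $M=k\times k$) is never isomorphic to $M^{(I)}$, so no argument can give $z=0$.

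What you need is the splitting you mention in passing (``for $d=0$ the triangle splits''), which you state without justification and then replace by the incorrect claim $z=0$. Since $z[1]\in\mathcal{T}^{\mathsf{b}}\cap M^{\perp_{\ge 0}}$, the bounded t-structure on $\mathcal{T}^{\mathsf{b}}$ from Proposition~\ref{bounded t structure} shows that $z[1]$ is a finite extension of objects of $\mathcal{H}_M[n]$ with $n\ge 1$. The hypothesis $\pd_{\mathcal{H}_M}x\le 0$ says exactly that $\Hom(x,\mathcal{H}_M[n])=0$ for $n\ge 1$. Hence $\Hom(x,z[1])=0$, so the map $x\to z[1]$ in your triangle vanishes. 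The triangle then splits as $M_0\cong x\oplus z$, and $x\in\Add M$. This is the paper's argument, and it is where boundedness is really used: to test $x$ against $z[1]$, not to force $z=0$. A minor point: in (i)$\Rightarrow$(ii) compactness of $M$ plays no role, since $\Hom(M^{(I)},-)\cong\prod\Hom(M,-)$ by the universal property of coproducts.
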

\begin{proof}
    It is clear that (i) implies (ii). To show the converse, we observe that condition (ii) holds if and only if $\mathsf{Hom}_{\mathcal{T}}(x,y)=0$ for all $y\in\mathcal{T}^{\mathsf{b}}\cap M^{\perp_{\geq -d}}$ since, by Proposition \ref{bounded t structure}, $(M^{\perp_{>0}},M^{\perp_{\leq 0}})$ restricts to a bounded t-structure in  $\mathcal{T}^{\mathsf{b}}$. We will prove the remaining claim with induction on $d$. We consider the coproduct $M_0=\oplus_{f\in\mathsf{Hom}(M,x)}M$ and the canonical morphism $M_0\rightarrow x$ (this is a right $(\Add M)$-approximation of $x$) and complete it to a triangle 
    \begin{equation} \label{triangle}
        x'\rightarrow M_0\xrightarrow{a}x\rightarrow x'[1].
    \end{equation}
    Applying $\mathsf{Hom}_{\mathcal{T}}(M,-)$ to the above yields a long exact sequence 
    \[
    \cdots \rightarrow  \mathsf{Hom}_{\mathcal{T}}(M,M_0)\xrightarrow{a_*} \mathsf{Hom}_{\mathcal{T}}(M,x)\rightarrow \mathsf{Hom}_{\mathcal{T}}(M,x'[1])\rightarrow \mathsf{Hom}_{\mathcal{T}}(M,M_0[1])\rightarrow \cdots
    \]
    Since $M$ is a compact silting object, it follows that $\mathsf{Hom}_{\mathcal{T}}(M,M_0[>0])= 0$. By the assumption on $x$, we know that $\mathsf{Hom}_{\mathcal{T}}(M,x[>0])= 0$. As a consequence, we see that $\mathsf{Hom}_{\mathcal{T}}(M,x'[\geq 2])=0$. Moreover, by the construction of $M_0$ it is directly verified that $a_*$ is surjective, implying that $\mathsf{Hom}_{\mathcal{T}}(M,x'[1])=0$ too. All in all, we know that $\mathsf{Hom}_{\mathcal{T}}(M,x'[1][\geq\!0])=0$. If $d=0$, then $x'[1]\in\mathcal{T}^{\mathsf{b}}\cap M^{\perp_{\geq 0}}$ and therefore $\mathsf{Hom}_{\mathcal{T}}(x,x'[1])=0$. In particular the triangle (\ref{triangle}) splits and so $x$ belongs to $\Add M$. For $d>0$, it follows as a consequence of the triangle (\ref{triangle}) that $\mathsf{Hom}_{\mathcal{T}}(x',y[>\!d-1])$ vanishes for any object $y$ that lies in the heart $\mathcal{H}_M$. The latter means that $\pd_{\mathcal{H}_M}x'\leq d-1$ which implies -- by the induction hypothesis -- that $x'$ lies in $\Add M\ast \cdots \ast\Add M[d-1]$. Therefore, it follows from (\ref{triangle}) that $x$ belongs to $\Add M\ast \Add M[1]\ast \cdots \ast \Add M[d]$.
\end{proof}

\begin{cor} \label{characterisation of projective dimension for dg alg}
    Let $A$ be a connective dg algebra with bounded cohomology. For any object $x$ that belongs to the intersection $\mathsf{D}^{\mathsf{b}}(A)\cap \mathsf{D}^{\leq 0}(A)$, the following are equivalent. 
    \begin{itemize}
        \item[\textnormal{(i)}] $x\in \Add A\ast \Add A[1]\ast\cdots \ast \Add A[d]$. 
        \item[\textnormal{(ii)}] $\pd_Ax\leq d$.
    \end{itemize}
\end{cor}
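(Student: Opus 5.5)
This is a direct specialisation of Proposition \ref{characterisation of projective dimension} to $\mathcal{T}=\mathsf{D}(A)$ and $M=A$. The plan is to check the hypotheses of that proposition and to translate its notation into the dg setting.

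First, $\mathsf{D}(A)$ is compactly generated by $A$. By Recollection \ref{recollection}, connectivity of $A$ makes $A$ a compact silting object, and the induced t-structure is $(A^{\perp_{>0}},A^{\perp_{\leq 0}})=(\mathsf{D}^{\leq 0}(A),\mathsf{D}^{>0}(A))$. Its heart $\mathcal{H}_A$ is equivalent to $\Mod H^0(A)$, so $\pd_{\mathcal{H}_A}x=\pd_Ax$ by Definition \ref{examples}.

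Second, we need $A\in\mathcal{T}^{\mathsf{b}}$. By the Example following Definition \ref{intrinsic subcategories}, $\mathcal{T}^{\mathsf{b}}=\mathsf{D}^{\mathsf{b}}(A)$, the modules with bounded cohomology. Since $A$ has bounded cohomology by hypothesis, $A$ lies in $\mathcal{T}^{\mathsf{b}}$.

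Third, the class of objects in the statement, $\mathsf{D}^{\mathsf{b}}(A)\cap\mathsf{D}^{\leq 0}(A)$, coincides with $\mathcal{T}^{\mathsf{b}}\cap A^{\perp_{>0}}$. Conditions (i) and (ii) are then literally the conditions of Proposition \ref{characterisation of projective dimension}, and the equivalence follows.

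No step is a genuine obstacle. The only point to handle carefully is the identification $\mathcal{T}^{\mathsf{b}}=\mathsf{D}^{\mathsf{b}}(A)$, which we take from \cite{regular} as stated in the Example. Alternatively, one can argue directly: $\mathsf{Hom}(A,x[n])=H^n(x)$, and $\mathsf{D}(A)^{\mathsf{c}}=\mathsf{thick}(A)$, so far-away vanishing against all compacts is equivalent to far-away vanishing against $A$, which is exactly boundedness of cohomology.
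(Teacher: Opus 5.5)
Your proposal is correct and takes the same approach as the paper. The paper gives no separate proof because the corollary is the specialisation of Proposition~\ref{characterisation of projective dimension} to $\mathcal{T}=\mathsf{D}(A)$ and $M=A$, with the bounded-cohomology hypothesis serving exactly to guarantee $A\in\mathcal{T}^{\mathsf{b}}=\mathsf{D}^{\mathsf{b}}(A)$, as you check.
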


\begin{rem} \label{gldim vs minamoto gldim}
    Following Minamoto \cite{minamoto}, for a connective dg algebra $A$, the projective dimension of an $A$-module $x$ that is contained in the heart of the standard t-structure, is equal to $d$ if the functor $F=\mathsf{RHom}_A(x,-)$ maps $\Mod H^0(A)$ to $\mathsf{D}^{[0,d]}(k)$ (complexes with cohomology concentrated in degrees 0 to $d$) and there is $y\in\Mod H^0(A)$ such that $H^dF(y)\neq 0$. It is clear that this coincides with the notion of projective dimension of Definition \ref{main definition} (beware that this is only for objects in the heart; for general modules, \cite{minamoto} follows a different convention), thus the global dimension of \cite[Theorem 5.1]{minamoto} and the one defined in this paper coincide. We also refer to \cite[Theorem 2.22]{minamoto} for a result related to Corollary \ref{characterisation of projective dimension for dg alg} and to \cite[Proposition 5.3]{minamoto}, which we already observed in Corollary \ref{derived invariance of global dimension}, as a direct consequence of intrinsicness. 
\end{rem}

For a $k$-linear compactly generated triangulated category $\mathcal{T}$, we can consider the subcategory of \emph{bounded finite objects}, denoted by $\mathcal{T}^{\mathsf{b}}_c$, which contains the objects $y$ in $\mathcal{T}$ such that 
\[
\oplus_{n\in\mathbb{Z}}\mathsf{Hom}_{\mathcal{T}}(x,y[n])\in\smod k
\]
for all $x\in\mathcal{T}^{\mathsf{c}}$. This notation is also used by Neeman \cite{neeman7} to denote a different (in principle) subcategory, although they sometimes coincide; see \cite[Theorem 1.4]{neeman7}.  Following \cite{regular}, we say that $\mathcal{T}$ is \emph{regular} if $\mathcal{T}^{\mathsf{b}}_c=\mathcal{T}^{\mathsf{c}}$. We will briefly discuss how this is related to the existence of a Serre functor on $\mathcal{T}^{\mathsf{b}}_c$. For that, we recall that a \emph{Serre functor} on a $k$-linear triangulated category $\mathcal{D}$ ($k$ now is assumed to be a field) is a triangle equivalence $\mathrm{S}\colon \mathcal{D}\rightarrow \mathcal{D}$ such that 
\[
\mathsf{Hom}_{\mathcal{D}}(x,y)\cong \mathsf{Hom}_{\mathcal{D}}(y,\mathrm{S}(x))^{\vee}
\]
naturally for every $x,y\in\mathcal{D}$, where $(-)^{\vee}$ stands for the $k$-dual. We have the following general observation, which implies, with Proposition \ref{global dimension via simples}, that if there exists a Serre functor $\mathrm{S}\colon\mathsf{D}_{\mathsf{fd}}(A)\rightarrow \mathsf{D}_{\mathsf{fd}}(A)$, where $A$ is a connective locally finite dg algebra over a field $k$, then the global dimension of $A$ is finite; this was first obtained by Happel \cite{happel serre} in the context of the derived category of a finite dimensional algebra. 

\begin{lem} \label{serre}
    Let $\mathcal{T}$ be a compactly generated triangulated category and $M$ a compact silting object. If $\mathcal{T}^{\mathsf{b}}_c$ admits a Serre functor, then for any object $s\in \mathcal{H}_M\cap \mathcal{T}^{\mathsf{b}}_c$, it holds that $\mathsf{Hom}_{\mathcal{T}}(s,s[n])= 0$ for $n\gg 0$. 
\end{lem}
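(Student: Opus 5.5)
We use the Serre duality to turn vanishing for large positive shifts into vanishing for large negative shifts, which the t-structure provides for free. Let $\mathrm{S}$ be the Serre functor on $\mathcal{T}^{\mathsf{b}}_c$. For $s \in \mathcal{H}_M\cap\mathcal{T}^{\mathsf{b}}_c$ and any $n$, since $\mathrm{S}$ is a triangle functor we get
\[
\mathsf{Hom}_{\mathcal{T}}(s,s[n])\cong \mathsf{Hom}_{\mathcal{T}}(s[n],\mathrm{S}(s))^{\vee}\cong\mathsf{Hom}_{\mathcal{T}}(s,\mathrm{S}(s)[-n])^{\vee}.
\]
Here we use that $\mathcal{T}^{\mathsf{b}}_c$ is a full subcategory of $\mathcal{T}$ and is closed under shifts. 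It therefore suffices to show that $\mathsf{Hom}_{\mathcal{T}}(s,\mathrm{S}(s)[-n])=0$ for $n\gg0$.

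The key step is to show that $\mathrm{S}(s)$ is bounded above for the t-structure $(M^{\perp_{>0}},M^{\perp_{\leq 0}})$. That is, we want $\mathrm{S}(s)\in M^{\perp_{\leq t}}$ for some integer $t$, or more usefully that $\mathrm{S}(s)$ lies in the coaisle up to a shift. First we check $\mathcal{T}^{\mathsf{b}}_c\subseteq\mathcal{T}^{\mathsf{b}}$. For $y\in\mathcal{T}^{\mathsf{b}}_c$ and compact $x$, the sum $\oplus_n\mathsf{Hom}(x,y[n])$ is finite-dimensional, so only finitely many of its terms are nonzero; hence $y\in(\mathcal{T}^{\mathsf{c}})^{\padova}$. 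In particular $\mathrm{S}(s)\in\mathcal{T}^{\mathsf{b}}$. By Proposition \ref{bounded t structure}, $\mathrm{S}(s)$ then lies in $M^{\perp_{\leq t}}$ for some $t$. Concretely, since $M$ is compact, $\mathsf{Hom}(M,\mathrm{S}(s)[i])=0$ for $|i|\gg0$, so $\mathrm{S}(s)$ lies in $M^{\perp_{> a}}\cap M^{\perp_{\le b}}$ for suitable integers $a,b$. Up to sign conventions, this means $\mathrm{S}(s)[-m]$ lies in the coaisle $M^{\perp_{\leq 0}}$ for all $m$ large enough.

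Now $s\in\mathcal{H}_M\subseteq M^{\perp_{>0}}$ lies in the aisle. By the torsion-pair property $\mathsf{Hom}(M^{\perp_{>0}},M^{\perp_{\leq0}})=0$, we get $\mathsf{Hom}_{\mathcal{T}}(s,\mathrm{S}(s)[-n])=0$ once $\mathrm{S}(s)[-n]\in M^{\perp_{\leq 0}}$. Explicitly, $\mathrm{S}(s)[-n]\in M^{\perp_{\leq 0}}$ means $\mathsf{Hom}(M,\mathrm{S}(s)[i-n])=0$ for all $i\le 0$. This holds when $n$ is large, since $\mathsf{Hom}(M,\mathrm{S}(s)[j])=0$ for all $j\ll0$. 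Dualising back through the isomorphism above gives $\mathsf{Hom}_{\mathcal{T}}(s,s[n])=0$ for $n\gg0$.

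The main obstacle is bookkeeping rather than substance: the index direction has to come out right, and we have to justify that $\mathcal{T}^{\mathsf{b}}_c\subseteq\mathcal{T}^{\mathsf{b}}$. Only boundedness of $\mathsf{Hom}(M,\mathrm{S}(s)[j])$ for $j\ll 0$ is used, and that follows directly from $M$ being compact together with the definition of $\mathcal{T}^{\mathsf{b}}_c$. So we do not even need the full strength of Proposition \ref{bounded t structure}.
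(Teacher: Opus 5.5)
Your proof is correct and has the same skeleton as the paper's. Serre duality turns $\mathsf{Hom}_{\mathcal{T}}(s,s[n])$ into $\mathsf{Hom}_{\mathcal{T}}(s,\mathrm{S}(s)[-n])^{\vee}$, and $\mathrm{S}(s)\in\mathcal{T}^{\mathsf{b}}_c\subseteq\mathcal{T}^{\mathsf{b}}$.

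The difference is in how you justify $\mathsf{Hom}_{\mathcal{T}}(s,\mathrm{S}(s)[-n])=0$ for $n\gg 0$:
\begin{itemize}
\item The paper first records $\mathcal{H}_M\subseteq{^{\perp_{\ll}}}\mathcal{H}_M$, which comes from aisle--coaisle orthogonality. It then upgrades this to $\mathcal{H}_M\subseteq{^{\perp_{\ll}}}(\mathcal{T}^{\mathsf{b}})$ by passing to thick closures. This uses $\mathcal{T}^{\mathsf{b}}=\mathsf{thick}(\mathcal{H}_M)$ from Proposition~\ref{bounded t structure} together with Lemma 2.9(v)(b) of \cite{regular}.
\item You test the bounded object $\mathrm{S}(s)$ directly against $M$. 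Since $\mathsf{Hom}_{\mathcal{T}}(M,\mathrm{S}(s)[j])=0$ for $j\ll 0$, the object $\mathrm{S}(s)[-n]$ lies in the coaisle $M^{\perp_{\leq 0}}$ for $n\gg 0$. Because $s\in M^{\perp_{>0}}$, the torsion-pair axiom then kills the Hom.
\end{itemize}

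Your route is more elementary. It needs only the definition of $\mathcal{T}^{\mathsf{b}}_c$, compactness of $M$ and the torsion-pair axiom, and not the nontrivial fact that the heart thickly generates $\mathcal{T}^{\mathsf{b}}$. The paper's route instead reuses the far-away-orthogonal formalism it already set up in the proof of Theorem~\ref{global dim vs relative global dim}.

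One minor point: the first isomorphism in your display is just Serre duality for the pair $(s,s[n])$ followed by applying the shift. It does not need $\mathrm{S}$ to be a triangle functor.
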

\begin{proof}
    We already know (see for instance the proof of Theorem \ref{global dim vs relative global dim}) that $\mathcal{H}_M\subseteq{^{\perp_{\ll}}}\mathcal{H}_M$. Moreover, we know that $\mathcal{T}^{\mathsf{b}}=\mathsf{thick}(\mathcal{H}_M)$ and therefore, it follows from \cite[Lemma 2.9(v)(b)]{regular} that $\mathcal{H}_M\subseteq {^{\perp_{\ll}}}(\mathcal{T}^{\mathsf{b}})$. Since there is a Serre functor $\mathrm{S}\colon \mathcal{T}^{\mathsf{b}}_c\rightarrow \mathcal{T}^{\mathsf{b}}_c$, it follows that for every $n\in\mathbb{Z}$, 
    \[
    \mathsf{Hom}_{\mathcal{T}}(s,s[n])\cong \mathsf{Hom}_{\mathcal{T}}(s[n],\mathrm{S}(s))^{\vee}\cong \mathsf{Hom}_{\mathcal{T}}(s,\mathrm{S}(s)[-n])^{\vee}.
    \]
    As a consequence of the above, since $\mathrm{S}(s)$ lies in $\mathcal{T}^{\mathsf{b}}_c\subseteq \mathcal{T}^{\mathsf{b}}$, it follows that $\mathsf{Hom}_{\mathcal{T}}(s,s[n])$ vanishes for $n$ large enough.
\end{proof}

We can now summarise equivalent characterisations of finite global dimension for a proper (meaning that $\oplus_{n\in\mathbb{Z}}H^n(A)\in\smod k$ - equivalently $\mathsf{per}(A)\subseteq \mathsf{D}_{\mathsf{fd}}(A)$) and connective dg algebra $A$ over a field $k$.
\begin{equation} 
\label{implications}
\begin{tikzcd}
	{H^{0}(A)/\mathsf{rad}H^0(A)\in\mathsf{per}A} && {\mathsf{D}(A)\text{ is regular}} \\
	& {A\text{ is smooth}} \\
	\\
	& {\mathsf{D}_{\mathsf{fd}}(A)\text{ has a Serre functor}} \\
	{\pd_AH^0(A)/\mathsf{rad}H^0(A)<\infty} && {\mathsf{D}(A) \text{ has finite gldim}} \\
	& {\gd A<\infty}
	\arrow["{{{{{\mathsf{D}_{\mathsf{fd}}(A)=\mathsf{thick}(s)}}}}}", curve={height=-24pt}, Rightarrow, 2tail reversed, from=1-1, to=1-3]
	\arrow["{{{{{k \text{ perfect \cite{Raedschelders_Stevenson}}}}}}}"{description}, curve={height=12pt}, Rightarrow, 2tail reversed, from=1-1, to=2-2]
	\arrow["{{{{{\text{Rem. }\ref{remark}}}}}}"{description}, curve={height=18pt}, Rightarrow, 2tail reversed, from=1-1, to=6-2]
	\arrow["{{{{\text{\cite[Prop. 3.5]{tomonaga}}}}}}"{description}, curve={height=-12pt}, Rightarrow, from=1-3, to=4-2]
	\arrow["{{{{{\text{Lemma }\ref{serre}}}}}}", curve={height=18pt}, Rightarrow, from=4-2, to=6-2]
	\arrow["{{{{{{{\text{Prop. }\ref{global dimension via simples}}}}}}}}"', curve={height=12pt}, Rightarrow, 2tail reversed, from=5-1, to=6-2]
	\arrow["{{{{{{{\text{Thm. }\ref{global dim vs relative global dim}}}}}}}}", curve={height=-12pt}, Rightarrow, 2tail reversed, from=5-3, to=6-2]
\end{tikzcd}
\end{equation}

Smoothness for $A$ and finite global dimension of $\mathsf{D}(A)$ were also compared in \cite[Corollary 5.10]{regular}. 

\begin{rem} \label{smoothness implies finite global dimension}
    It is known that for \emph{any} dg algebra $A$ over a field, smoothness implies the inclusion $\mathsf{D}_{\mathsf{fd}}(A)\subseteq \mathsf{per}(A)$ of subcategories of $\mathsf{D}(A)$, see the proof of \cite[Theorem 4.1]{keller} or \cite[Lemma 3.8]{kuznetsov_shinder}.
\end{rem}

\begin{rem}
    As expected, the global dimension of a connective dg algebra $A$ is, in principle, not related to the global dimension of $H^0(A)$. We can consider the example in \cite[Example 7.2]{Raedschelders_Stevenson}; $A=k\langle x,y\rangle$ with $|x|=0$, $|y|=-1$ and $d(y)=x^2$. Then $H^0(A)=k[x]/(x^2)$ has infinite global dimension but $A$ does not, as it is smooth, so $\mathsf{D}_{\mathsf{fd}}(A)\subseteq \mathsf{per}(A)$ from Remark \ref{smoothness implies finite global dimension} and, since $A$ is locally finite the latter implies that $\gd A<\infty$.
\end{rem}

\section{The singularity category of a dg algebra} \label{singularity} Throughout this section we work over a base field $k$. We recall that for a small dg category $\mathcal{A}$ over $k$, the subcategory of $\mathsf{D}(\mathcal{A})$ that consists of finite dimensional modules is defined as 
\[
\mathsf{D}_{\mathsf{fd}}(\mathcal{A})=\{x\in\mathsf{D}(\mathcal{A}):\oplus_{n\in\mathbb{Z}}\mathsf{Hom}_{\mathsf{D}(\mathcal{A})}(h_a,x[n])\in\smod k, \forall a\in\mathcal{A}\},
\]
where $h_a$ stands for the module represented by $a\in\mathcal{A}$ (namely $\mathcal{A}(-,a)$).
The following fact is standard.
\begin{lem} \label{bounded finite objects}
    For any small dg category $\mathcal{A}$ and for $\mathcal{D}=\mathsf{D}(\mathcal{A})$, we have $\mathcal{D}^{\mathsf{b}}_c=\mathsf{D}_{\mathsf{fd}}(\mathcal{A})$.
\end{lem}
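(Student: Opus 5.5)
Both inclusions follow by unwinding the definition of $\mathcal{D}^{\mathsf{b}}_c$ against the compact objects of $\mathsf{D}(\mathcal{A})$. We use that $\mathcal{D}^{\mathsf{c}}=\mathsf{per}(\mathcal{A})=\mathsf{thick}\{h_a: a\in\mathcal{A}\}$, the standard description of compact objects in the derived category of a small dg category \cite{keller deriving}. By definition, $y\in\mathcal{D}^{\mathsf{b}}_c$ means that $\oplus_{n\in\mathbb{Z}}\mathsf{Hom}_{\mathcal{D}}(x,y[n])\in\smod k$ for every compact $x$.

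The inclusion $\mathcal{D}^{\mathsf{b}}_c\subseteq\mathsf{D}_{\mathsf{fd}}(\mathcal{A})$ is immediate: each representable $h_a$ is compact, so the condition defining $\mathcal{D}^{\mathsf{b}}_c$, applied with $x=h_a$, is precisely the condition defining $\mathsf{D}_{\mathsf{fd}}(\mathcal{A})$.

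For the reverse inclusion, fix $y\in\mathsf{D}_{\mathsf{fd}}(\mathcal{A})$. Let $\mathcal{C}_y$ be the full subcategory of $\mathcal{D}$ consisting of those $x$ with $\oplus_{n}\mathsf{Hom}_{\mathcal{D}}(x,y[n])\in\smod k$. We show that $\mathcal{C}_y$ is thick; since it contains every $h_a$, it then contains $\mathsf{thick}\{h_a\}=\mathcal{D}^{\mathsf{c}}$, which is what we want.

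\begin{itemize}
\item $\mathcal{C}_y$ is closed under shifts, because shifting $x$ only reindexes the graded vector space $\oplus_n\mathsf{Hom}(x,y[n])$.
\item $\mathcal{C}_y$ is closed under direct summands, because the graded space for a summand is a subspace of a finite dimensional one.
\item $\mathcal{C}_y$ is closed under cones. Given a triangle $x_1\to x_2\to x_3\to x_1[1]$ with $x_1,x_3\in\mathcal{C}_y$, apply $\mathsf{Hom}(-,y)$. The long exact sequence, in each degree, places $\mathsf{Hom}(x_2,y[n])$ between $\mathsf{Hom}(x_3,y[n])$ and $\mathsf{Hom}(x_1,y[n])$. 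Hence
\[
\dim_k\mathsf{Hom}(x_2,y[n])\leq\dim_k\mathsf{Hom}(x_1,y[n])+\dim_k\mathsf{Hom}(x_3,y[n]).
\]
Summing over $n$ gives finite total dimension, so $x_2\in\mathcal{C}_y$.
\end{itemize}

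The argument has no real obstacle; the only input beyond the definitions is the identification $\mathcal{D}^{\mathsf{c}}=\mathsf{thick}\{h_a\}$. Note that we need the whole direct sum over $n$ to be finite dimensional, not just each graded piece, and the dimension estimate above handles exactly this.
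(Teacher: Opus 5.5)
Your proof is correct and follows the same route as the paper: you reduce via $\mathcal{D}^{\mathsf{c}}=\mathsf{thick}\{h_a\}$ to the representables, where the condition is literally the definition of $\mathsf{D}_{\mathsf{fd}}(\mathcal{A})$. The only difference is that you verify the thickness of $\mathcal{C}_y$ by hand, using that $k$ is a field as assumed in that section, whereas the paper cites this step from \cite[Lemma 5.2]{regular}.
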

\begin{proof}
    By definition, an object $y\in\mathcal{D}$ belongs to $\mathcal{D}^{\mathsf{b}}_c$ if and only if for every $x\in\mathsf{per}(\mathcal{A})$, 
    \[
    \oplus_{n\in\mathbb{Z}}\mathsf{Hom}_{\mathcal{D}}(x,y[n])\in \smod k. 
    \]
    Since $\mathsf{per}(\mathcal{A})=\mathsf{thick}(h_a, a\in\mathcal{A})$, it follows by \cite[Lemma 5.2]{regular} that $y\in\mathcal{D}^{\mathsf{b}}_c$ if and only if 
    \[
    \oplus_{n\in\mathbb{Z}}\mathsf{Hom}_{\mathcal{D}}(h_a,y[n])\in\smod k
    \]
    for all $a\in\mathcal{A}$. The latter translates to $\oplus_{n\in\mathbb{Z}} H^n(y(a))\in\smod k$ or in other words $y(a)\in\mathsf{D}^{\mathsf{b}}(\smod k)$ for all $a\in\mathcal{A}$, which means precisely that $y$ lies in $\mathsf{D}_{\mathsf{fd}}(\mathcal{A})$.
\end{proof}

We recall a concept from \cite{kuznetsov_shinder}, for which we fix from now on a small, idempotent complete dg enhanced triangulated category $\mathcal{A}$. The latter is \emph{reflexive} if the ``coevaluation'' functor $\mathsf{coev}_{\mathcal{A}}\colon\mathcal{A}\rightarrow \mathsf{D}_{\mathsf{fd}}(\mathsf{D}_{\mathsf{fd}}(\mathcal{A}))$ given by $a\mapsto \mathsf{coev}_{\mathcal{A},a}$, where $\mathsf{coev}_{\mathcal{A},a}(x)=x(a)^{\vee}$ and $(-)^{\vee}$ stands for the $k$-dual, is an equivalence (see \cite{kuznetsov_shinder} for details). We treat $\mathsf{D}_{\mathsf{fd}}(\mathcal{A})$ as a dg category and thus we can consider its derived category $\mathsf{D}(\mathsf{D}_{\mathsf{fd}}(\mathcal{A}))$. When $\mathcal{A}$ is additionally proper, then the inclusion $i\colon \mathcal{A}\rightarrow \mathsf{D}_{\mathsf{fd}}(\mathcal{A})$ gives rise to an adjunction
\begin{equation} \label{adjoint triple}
    \begin{tikzcd}
\mathsf{D}(\mathsf{D}_{\mathsf{fd}}(\mathcal{A})) \arrow[rr, "i_*"] &  & \mathsf{D}(\mathcal{A}) \arrow[ll, "i^*"', bend right] \arrow[ll, "i_!", bend left]
\end{tikzcd}
\end{equation}
of triangle functors. For a $k$-linear compactly generated triangulated category $\mathcal{T}$, the \emph{Brown--Comenetz} dual of a compact object $t$ is an object $t^*$ such that 
\[
\mathsf{Hom}_{\mathcal{T}}(-,t^*)\simeq \mathsf{Hom}_k(\mathsf{Hom}_{\mathcal{T}}(t,-),k).
\] 
It is a consequence of Brown representability \cite{neeman duality} that the above object exists. The following result is a version of \cite[Proposition 5.3(iii)]{regular} for reflexive dg categories. 

\begin{prop} \label{proposition appendix}
    Under the above setup, the equality $\mathcal{T}^{\mathsf{b}}_c=i_!(\mathcal{A}^*)$ holds, where $\mathcal{T}=\mathsf{D}(\mathsf{D}_{\mathsf{fd}}(\mathcal{A}))$ and $\mathcal{A}^*$ stands for the subcategory of $\mathsf{D}(\mathcal{A})$ that consists of the Brown--Comenetz duals of (the objects represented by) $\mathcal{A}$.  
\end{prop}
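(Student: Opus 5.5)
Set $\mathcal{B}=\mathsf{D}_{\mathsf{fd}}(\mathcal{A})$, viewed as a small dg category, so that $\mathcal{T}=\mathsf{D}(\mathcal{B})$. The plan has three steps: rewrite $\mathcal{T}^{\mathsf{b}}_c$ as $\mathsf{D}_{\mathsf{fd}}(\mathcal{B})$, use reflexivity to identify this with the coevaluation modules, and then check that each coevaluation module is $i_!$ of a Brown--Comenetz dual.

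\emph{Step 1.} By Lemma \ref{bounded finite objects} applied to $\mathcal{B}$, we have $\mathcal{T}^{\mathsf{b}}_c=\mathsf{D}_{\mathsf{fd}}(\mathcal{B})$. Since $\mathcal{A}$ is reflexive, $\mathsf{coev}_{\mathcal{A}}$ is an equivalence $\mathcal{A}\to\mathsf{D}_{\mathsf{fd}}(\mathcal{B})$, where $\mathsf{D}_{\mathsf{fd}}(\mathcal{B})$ is read inside $\mathsf{D}(\mathcal{B})$. Hence $\mathcal{T}^{\mathsf{b}}_c$ consists exactly of the objects isomorphic to $\mathsf{coev}_{\mathcal{A},a}$ for $a\in\mathcal{A}$. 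Here I use that $\mathcal{A}$ is idempotent complete and triangulated, so the essential image is already closed under summands and shifts. The claim therefore reduces to the natural isomorphism $i_!(h_a^*)\cong\mathsf{coev}_{\mathcal{A},a}$ in $\mathsf{D}(\mathcal{B})$ for every $a\in\mathcal{A}$.

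\emph{Step 2.} I would compute $i_!(h_a^*)$ by evaluating at objects of $\mathcal{B}$, i.e.\ by mapping out of the representables $h_x$, $x\in\mathcal{B}$. Here $i^*$ is restriction along $i$, and it is right adjoint to $i_!$ and left adjoint to $i_*$. Recall the setup: for $\mathcal{A}$ proper, $i_*$ is the induction $-\otimes^{\mathbf L}_{\mathcal{A}}\mathcal{B}$ and has a right adjoint. So $i^*$ in (\ref{adjoint triple}) should be read as restriction, and $i_!$ as the right adjoint of restriction, namely coinduction $\mathsf{RHom}_{\mathcal{A}}(\mathcal{B},-)$. Then for $x\in\mathcal{B}$,
\[
\mathsf{Hom}_{\mathsf{D}(\mathcal{B})}(h_x,i_!(h_a^*)[n])\cong\mathsf{Hom}_{\mathsf{D}(\mathcal{A})}(i^*h_x,h_a^*[n]).
\]
Restricting $h_x=\mathcal{B}(-,x)$ along $i$ gives $b\mapsto\mathsf{Hom}(h_b,x)=x(b)$, so $i^*h_x\cong x$ as an $\mathcal{A}$-module. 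The defining property of the Brown--Comenetz dual then gives
\[
\mathsf{Hom}_{\mathsf{D}(\mathcal{A})}(x,h_a^*[n])\cong\mathsf{Hom}_k(\mathsf{Hom}(h_a,x[-n]),k)=H^{-n}(x(a))^{\vee}=H^n(x(a)^{\vee}),
\]
which is $H^n(\mathsf{coev}_{\mathcal{A},a}(x))$. This gives the isomorphism on cohomology.

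\emph{Step 3 (main obstacle).} The cohomology computation alone does not give an isomorphism of dg modules; it must be upgraded. I would work at the dg level to obtain a natural quasi-isomorphism of $\mathcal{B}$-modules $i_!(h_a^*)\simeq\mathsf{coev}_{\mathcal{A},a}$:
\begin{itemize}
\item Choose a dg model for the Brown--Comenetz dual, namely $h_a^*=\mathcal{A}(a,-)^{\vee}$, which is legitimate since $\mathcal{A}$ is proper.
\item Compute $\mathsf{RHom}_{\mathcal{A}}(x,\mathcal{A}(a,-)^{\vee})\cong(x\otimes^{\mathbf L}_{\mathcal{A}}\mathcal{A}(a,-))^{\vee}\cong x(a)^{\vee}$ by tensor--hom adjunction and Yoneda.
\item Check that this identification is functorial in $x\in\mathcal{B}$, so it is a morphism of $\mathcal{B}$-modules and not just objectwise.
\end{itemize}
The functoriality in $x$ is the point where care is needed; everything else is formal. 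Conversely, since every object of $\mathcal{T}^{\mathsf{b}}_c$ is some $\mathsf{coev}_{\mathcal{A},a}$ by Step 1, the two inclusions together give the equality $\mathcal{T}^{\mathsf{b}}_c=i_!(\mathcal{A}^*)$.
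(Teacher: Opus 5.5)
Your proposal is correct, and Steps 1--2 coincide with the paper's proof. Lemma \ref{bounded finite objects} and reflexivity reduce the claim to $i_!(h_a^*)\cong\mathsf{coev}_{\mathcal{A},a}$, and adjunction plus the Brown--Comenetz property give $\mathsf{Hom}(h_x,i_!(h_a^*)[n])\cong H^n(x(a)^{\vee})$.

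You differ from the paper only in how you obtain an actual isomorphism in $\mathsf{D}(\mathsf{D}_{\mathsf{fd}}(\mathcal{A}))$. The paper avoids dg models. Restricting $\mathsf{coev}_{\mathcal{A},a}$ along $i$ gives $b\mapsto h_b(a)^{\vee}=\mathcal{A}(a,b)^{\vee}$, i.e.\ $i_*\mathsf{coev}_{\mathcal{A},a}\cong h_a^*$. So the unit of the adjunction $(i_*,i_!)$ gives a morphism $\mathsf{coev}_{\mathcal{A},a}\to i_!i_*\mathsf{coev}_{\mathcal{A},a}\cong i_!(h_a^*)$. The Hom computation shows this morphism induces isomorphisms on $\mathsf{Hom}(h_x,-[n])$ for all $x$ and $n$, and since the representables generate, it is an isomorphism. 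This makes the functoriality in $x$, which you flag as the main obstacle, automatic.

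Your dg-level route also works and is routine. Realise $\mathsf{D}_{\mathsf{fd}}(\mathcal{A})$ by h-projective modules, so that coinduction is $x\mapsto\mathsf{Hom}_{\mathcal{A}}(x,\mathcal{A}(a,-)^{\vee})\cong(x\otimes_{\mathcal{A}}\mathcal{A}(a,-))^{\vee}\cong x(a)^{\vee}$. These are natural dg isomorphisms, and $\mathcal{A}(a,-)^{\vee}$ is h-injective over a field. You get an identification on the nose, at the cost of fixing models.

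One correction on notation. In (\ref{adjoint triple}) the functor $i_*$ goes from $\mathsf{D}(\mathsf{D}_{\mathsf{fd}}(\mathcal{A}))$ to $\mathsf{D}(\mathcal{A})$. So $i_*$ is restriction, $i^*$ is induction, and $i_!$ is coinduction, the right adjoint of $i_*$. Your reading of $i_*$ as induction and $i^*$ as restriction does not match the arrow directions. Your phrase ``$i^*$ is right adjoint to $i_!$'' also contradicts your later, correct use of $i_!$ as the right adjoint of restriction. Only $i_!$ enters the statement and you use it correctly, so nothing breaks, but the labels should be fixed.

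Also, properness is what makes $i$ exist, since it gives $h_a\in\mathsf{D}_{\mathsf{fd}}(\mathcal{A})$. The fact that $\mathcal{A}(a,-)^{\vee}$ models $h_a^*$ holds for any $\mathcal{A}$ over a field and does not need properness.
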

\begin{proof}
    We know from Lemma \ref{bounded finite objects} that $\mathcal{T}^{\mathsf{b}}_c=\mathsf{D}_{\mathsf{fd}}(\mathsf{D}_{\mathsf{fd}}(\mathcal{A}))$. Since $\mathcal{A}$ is reflexive, it follows that every object in the latter category is of the form $\mathsf{coev}_{\mathcal{A},a}$. Moreover, it holds that $i_*\mathsf{coev}_{\mathcal{A},a}\cong \mathcal{A}(a,-)^{\vee}\cong h_a^*$, by definition. We have the following natural isomorphisms for every $x\in\mathsf{D}_{\mathsf{fd}}(\mathcal{A})$
    \begin{align*}
        \mathsf{Hom}_{\mathsf{D}(\mathsf{D}_{\mathsf{fd}}(\mathcal{A}))}(h_x,i_!(h_a^*)) & \cong \mathsf{Hom}_{\mathsf{D}(\mathcal{A})}(i_*(h_x),h_a^*) \\
        & \cong \mathsf{Hom}_{\mathsf{D}(\mathcal{A})}(h_a,i_*(h_x))^{\vee} \\ 
        & \cong i_*(h_x)(a)^{\vee}\cong x(a)^{\vee}
    \end{align*}
    which are induced by the composite 
    \[
    \mathsf{coev}_{\mathcal{A},a}\rightarrow i_!i_*\mathsf{coev}_{\mathcal{A},a}\cong i_!h_a^*,
    \]
    where the left morphism is the unit of the adjunction $(i_*,i_!)$. All in all, it follows that $\mathsf{coev}_{\mathcal{A},a}\cong i_!(h_a^*)$ in $\mathsf{D}(\mathsf{D}_{\mathsf{fd}}(\mathcal{A}))$, which completes the proof.
\end{proof}

\begin{prop} \label{result of appendix}
    Under the above setup consider $\mathcal{T}_{\mathsf{ac}}= \mathsf{D}(\mathsf{D}_{\mathsf{fd}}(\mathcal{A}))/\mathsf{D}(\mathcal{A})$. Then we have $(\mathcal{T}_{\mathsf{ac}})^{\mathsf{b}}_c=0$. 
\end{prop}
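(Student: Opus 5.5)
The plan is to realise $\mathcal{T}_{\mathsf{ac}}$ as a subcategory of $\mathcal{T}=\mathsf{D}(\mathsf{D}_{\mathsf{fd}}(\mathcal{A}))$ in which Hom-spaces out of compact objects are computed in $\mathcal{T}$ itself. Then I would use Proposition \ref{proposition appendix} to identify the candidate bounded finite objects, and show that they are killed by $i_*$.

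First I set up the localisation. Since $\mathcal{A}$ is proper, the inclusion $i\colon\mathcal{A}\rightarrow\mathsf{D}_{\mathsf{fd}}(\mathcal{A})$ is a fully faithful dg functor. Hence $i^*$ is fully faithful: it sends $h_a$ to $h_{i(a)}$, and fully faithfulness is checked on compact generators. The subcategory $\mathsf{D}(\mathcal{A})$ is viewed inside $\mathcal{T}$ via $i^*$. Since $i^*$ is fully faithful, its right adjoint satisfies $i_*i^*\cong\mathrm{id}$. Likewise, because $i_!$ is right adjoint to $i_*$ and $i_*$ has the fully faithful left adjoint $i^*$, we get $i_*i_!\cong\mathrm{id}$.

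The image of $i^*$ is the localising subcategory generated by the compact objects $h_{i(a)}$. By Neeman's localisation theorem, $\mathcal{T}_{\mathsf{ac}}$ is then equivalent to the right orthogonal $\ker i_*=(i^*\mathsf{D}(\mathcal{A}))^{\perp}$. It is compactly generated by the images $q(h_x)$, $x\in\mathsf{D}_{\mathsf{fd}}(\mathcal{A})$, and its compact objects form the thick closure of these. Moreover, for $y\in\ker i_*$ and any $t\in\mathcal{T}$ we have $\mathsf{Hom}_{\mathcal{T}_{\mathsf{ac}}}(q(t),y)\cong\mathsf{Hom}_{\mathcal{T}}(t,y)$.

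Next, let $y\in(\mathcal{T}_{\mathsf{ac}})^{\mathsf{b}}_c$, regarded as an object of $\ker i_*$. By \cite[Lemma 5.2]{regular}, the finiteness condition defining bounded finite objects only needs to be checked on a set of compact generators, here the $q(h_x)$. Combined with the Hom identification above, this says that $\oplus_n\mathsf{Hom}_{\mathcal{T}}(h_x,y[n])$ is finite dimensional for all $x\in\mathsf{D}_{\mathsf{fd}}(\mathcal{A})$. Running the same reduction in the other direction inside $\mathcal{T}$ (compare Lemma \ref{bounded finite objects}), this means $y\in\mathcal{T}^{\mathsf{b}}_c=\mathsf{D}_{\mathsf{fd}}(\mathsf{D}_{\mathsf{fd}}(\mathcal{A}))$. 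By Proposition \ref{proposition appendix}, $y\cong i_!(h_a^*)$ for some $a\in\mathcal{A}$. Applying $i_*$ gives $0=i_*y\cong i_*i_!(h_a^*)\cong h_a^*$, and therefore $y\cong i_!(0)=0$.

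The main obstacle I expect is the bookkeeping in the localisation step. One must make sure that the Verdier quotient is identified with $\ker i_*$, so that $q$ restricted to $\ker i_*$ is an equivalence. One also needs the compact objects of the quotient to be generated by the $q(h_x)$, so that the reduction via \cite[Lemma 5.2]{regular} applies.

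A related subtlety is that Proposition \ref{proposition appendix} is stated elementwise, as $i_!(\mathcal{A}^*)$ with $\mathcal{A}$ idempotent complete and reflexive. This is what lets every bounded finite object be written as a single $i_!(h_a^*)$, rather than only as an object of a thick closure. Even if only a thick-closure description were available, the argument would still go through: $i_*i_!\cong\mathrm{id}$ is an exact equivalence onto its image, so $i_*y=0$ forces $y=0$ for every $y$ in $i_!(\mathsf{D}(\mathcal{A}))$.
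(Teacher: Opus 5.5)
Your proposal is correct and follows essentially the same route as the paper. Both arguments identify $\mathcal{T}_{\mathsf{ac}}$ with $\ker i_*$ (the paper via the recollement, with your $q$ playing the role of $\mathsf{I}_{\lambda}$), use Neeman's theorem and the adjunction to place a bounded finite object of $\mathcal{T}_{\mathsf{ac}}$ in $\mathcal{T}^{\mathsf{b}}_c=i_!(\mathcal{A}^*)$, and conclude that it vanishes; you additionally justify that last step via $i_*i_!\cong\mathrm{id}$, which the paper leaves implicit in ``thus $x\cong 0$''.
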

\begin{proof}
    There is a recollement of triangulated categories (see for instance \cite{chen chen})
\[\begin{tikzcd}
\mathcal{T}_{\mathsf{ac}} \arrow[rr, "\mathsf{I}"] &  & \mathsf{D}(\mathsf{D}_{\mathsf{fd}}(\mathcal{A})) \arrow[rr, "i_*"] \arrow[ll, bend left] \arrow[ll, "\mathsf{I}_{\lambda}"', bend right] &  & \mathsf{D}(\mathcal{A}) \arrow[ll, "i^*"', bend right] \arrow[ll, "i_!", bend left]
\end{tikzcd}\]
and we know from \cite[Theorem 2.1]{neeman5} that $\mathcal{T}_{\mathsf{ac}}^{\mathsf{c}}=\mathsf{thick}(\mathsf{I}_{\lambda}(\mathsf{D}_{\mathsf{fd}}(\mathcal{A})))$. As a consequence of the latter and the adjunction $(\mathsf{I}_{\lambda},\mathsf{I})$, it follows that an object $x$ belongs to $(\mathcal{T}_{\mathsf{ac}})^{\mathsf{b}}_c$ if and only if $\mathsf{I}(x)$ lies in $\mathcal{T}^{\mathsf{b}}_c=i_!(\mathcal{A}^*)$, thus $x\cong 0$. 
\end{proof}

The following consequence, which is of independent interest, was pointed out to us by an anonymous referee; see \cite{drinfeld} for the notion of a dg quotient.

\begin{cor} \label{not reflexive}
    Let $\mathcal{A}$ be a reflexive and proper dg category. If the dg quotient $\mathsf{D}_{\mathsf{fd}}(\mathcal{A})/\mathcal{A}$ is non-trivial, then it is not reflexive. 
\end{cor}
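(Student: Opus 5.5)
We argue by contradiction. Set $\mathcal{B}=\mathsf{D}_{\mathsf{fd}}(\mathcal{A})/\mathcal{A}$ (the dg quotient, idempotent completed if needed) and suppose $\mathcal{B}$ is non-trivial and reflexive. The plan is to identify $\mathcal{T}_{\mathsf{ac}}=\mathsf{D}(\mathsf{D}_{\mathsf{fd}}(\mathcal{A}))/\mathsf{D}(\mathcal{A})$ with $\mathsf{D}(\mathcal{B})$, compute its bounded finite objects in two ways, and get a contradiction.

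First, the quotient is compactly generated, and by \cite[Theorem 2.1]{neeman5} (as used in the proof of Proposition \ref{result of appendix}) its compact objects are the thick closure of the image of $\mathsf{D}_{\mathsf{fd}}(\mathcal{A})$ under $\mathsf{I}_{\lambda}$. This image is, up to idempotent completion, the Verdier quotient $\mathsf{D}_{\mathsf{fd}}(\mathcal{A})/\mathsf{thick}(\mathcal{A})$ (Neeman--Thomason localisation). Drinfeld's dg quotient \cite{drinfeld} provides a dg enhancement of it, so $\mathcal{T}_{\mathsf{ac}}\simeq\mathsf{D}(\mathcal{B})$. This identification of the Verdier quotient of derived categories with the derived category of the dg quotient is the step needing the most care. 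Here $\mathcal{A}$ sits inside $\mathsf{D}_{\mathsf{fd}}(\mathcal{A})$ because $\mathcal{A}$ is proper.

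Second, Lemma \ref{bounded finite objects} gives $(\mathcal{T}_{\mathsf{ac}})^{\mathsf{b}}_c=\mathsf{D}_{\mathsf{fd}}(\mathcal{B})$, and Proposition \ref{result of appendix} gives $(\mathcal{T}_{\mathsf{ac}})^{\mathsf{b}}_c=0$. Hence $\mathsf{D}_{\mathsf{fd}}(\mathcal{B})=0$, and therefore $\mathsf{D}_{\mathsf{fd}}(\mathsf{D}_{\mathsf{fd}}(\mathcal{B}))=\mathsf{D}_{\mathsf{fd}}(0)=0$.

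Finally, reflexivity of $\mathcal{B}$ means $\mathsf{coev}_{\mathcal{B}}\colon\mathcal{B}\to\mathsf{D}_{\mathsf{fd}}(\mathsf{D}_{\mathsf{fd}}(\mathcal{B}))=0$ is an equivalence. This forces every object of $\mathcal{B}$ to be zero, contradicting non-triviality. The only input is that $\mathcal{B}$ is small and idempotent complete, so that reflexivity makes sense; this holds after passing to the idempotent completion, which changes neither $\mathsf{D}(\mathcal{B})$ nor triviality.
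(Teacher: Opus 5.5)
Your proof is correct and follows the paper's argument. You identify $\mathsf{D}(\mathsf{D}_{\mathsf{fd}}(\mathcal{A})/\mathcal{A})$ with $\mathsf{D}(\mathsf{D}_{\mathsf{fd}}(\mathcal{A}))/\mathsf{D}(\mathcal{A})$, combine Lemma \ref{bounded finite objects} with Proposition \ref{result of appendix} to get $\mathsf{D}_{\mathsf{fd}}(\mathsf{D}_{\mathsf{fd}}(\mathcal{A})/\mathcal{A})=0$, and conclude that reflexivity would force the quotient to vanish. The one difference is that the paper takes the first identification as the standard fact that the derived category of a Drinfeld dg quotient is the Verdier quotient of the derived categories. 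That is cleaner than rebuilding it from Neeman's description of the compact objects, since that description alone does not pin down the dg enhancement.
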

\begin{proof}
    We have that $\mathsf{D}(\mathsf{D}_{\mathsf{fd}}(\mathcal{A})/\mathcal{A})\simeq \mathsf{D}(\mathsf{D}_{\mathsf{fd}}(\mathcal{A}))/\mathsf{D}(\mathcal{A})$ and thus it follows by Lemma \ref{bounded finite objects} and Proposition~\ref{result of appendix} that $\mathsf{D}_{\mathsf{fd}}(\mathsf{D}_{\mathsf{fd}}(\mathcal{A})/\mathcal{A})=0$, hence $\mathsf{D}_{\mathsf{fd}}(\mathcal{A})/\mathcal{A}$ cannot be reflexive. 
\end{proof}

\begin{cor} \label{regularity for big singularity}
    Let $A$ be a proper connective dg algebra. Then $\gd A<\infty$ if and only if the triangulated category $\mathcal{T}_{\mathsf{ac}}=\mathsf{D}(\mathsf{D}_{\mathsf{fd}}(A))/\mathsf{D}(A)$ is regular.
\end{cor}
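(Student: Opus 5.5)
Regularity of $\mathcal{T}_{\mathsf{ac}}$ means $(\mathcal{T}_{\mathsf{ac}})^{\mathsf{b}}_c=(\mathcal{T}_{\mathsf{ac}})^{\mathsf{c}}$. By Proposition \ref{result of appendix} the left-hand side is $0$, provided the setup applies. So the plan is to show that $\mathcal{T}_{\mathsf{ac}}$ is regular exactly when $(\mathcal{T}_{\mathsf{ac}})^{\mathsf{c}}=0$, and that this vanishing is equivalent to $\gd A<\infty$.

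\textbf{Step 1: the setup of Proposition \ref{result of appendix} applies.} Take $\mathcal{A}=\mathsf{per}(A)$ with its standard dg enhancement. This is small and idempotent complete, and it is proper since $A$ is proper. Its module category is $\mathsf{D}(A)$ by Morita invariance, and $\mathsf{D}_{\mathsf{fd}}(\mathcal{A})=\mathsf{D}_{\mathsf{fd}}(A)$. We need $\mathcal{A}$ to be reflexive. For proper connective dg algebras over a field I would cite Kuznetsov--Shinder \cite{kuznetsov_shinder}, who show that proper connective dg algebras (or at least those whose $H^0$ is finite dimensional) give reflexive categories. This is the step I expect to be the main obstacle, since it rests entirely on an external result and I must check its hypotheses match. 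Granting it, $(\mathcal{T}_{\mathsf{ac}})^{\mathsf{b}}_c=0$, so $\mathcal{T}_{\mathsf{ac}}$ is regular if and only if $(\mathcal{T}_{\mathsf{ac}})^{\mathsf{c}}=0$.

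\textbf{Step 2: identify the compacts.} As in the proof of Proposition \ref{result of appendix}, \cite[Theorem 2.1]{neeman5} gives
\[
(\mathcal{T}_{\mathsf{ac}})^{\mathsf{c}}=\mathsf{thick}(\mathsf{I}_\lambda(\mathsf{D}_{\mathsf{fd}}(A))),
\]
the idempotent completion of the Verdier quotient $\mathsf{D}_{\mathsf{fd}}(A)/\mathsf{per}(A)$. Here $\mathsf{per}(A)\subseteq\mathsf{D}_{\mathsf{fd}}(A)$ by properness, and the quotient is taken via the embedding $i$. The vanishing of $\mathsf{I}_\lambda(h_x)$ can be read off from the recollement: $\mathsf{I}_\lambda(h_x)=0$ if and only if $h_x$ lies in the essential image of $i^*$ restricted to compacts, that is, in $\mathsf{thick}(i^*\mathsf{per}(A))=i(\mathsf{per}(A))$. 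Hence $(\mathcal{T}_{\mathsf{ac}})^{\mathsf{c}}=0$ if and only if every $x\in\mathsf{D}_{\mathsf{fd}}(A)$ lies in $\mathsf{per}(A)$, i.e.\ $\mathsf{D}_{\mathsf{fd}}(A)\subseteq\mathsf{per}(A)$. Together with properness this says $\mathsf{D}_{\mathsf{fd}}(A)=\mathsf{per}(A)$.

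\textbf{Step 3: conclude.} Since $A$ is proper it is locally finite. By Remark \ref{remark} (via Proposition \ref{global dimension via simples} and \cite[Proposition 3.4]{tomonaga}), $\gd A<\infty$ holds if and only if $\mathsf{D}_{\mathsf{fd}}(A)\subseteq\mathsf{per}(A)$. Combining this with Steps 1 and 2 proves the corollary.
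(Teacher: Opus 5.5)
Your proposal is correct and follows the paper's proof essentially step for step. Reflexivity of $\mathsf{per}(A)$ together with Proposition \ref{result of appendix} gives $(\mathcal{T}_{\mathsf{ac}})^{\mathsf{b}}_c=0$, so regularity reduces to $\mathcal{T}_{\mathsf{ac}}^{\mathsf{c}}=0$, i.e.\ $\mathsf{D}_{\mathsf{fd}}(A)/\mathsf{per}(A)\simeq 0$, which is equivalent to $\gd A<\infty$ by Remark \ref{remark}; for the reflexivity step the paper cites \cite[Proposition 6.9]{kuznetsov_shinder} together with \cite[Proposition 5.4]{goodbody} rather than Kuznetsov--Shinder alone.
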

\begin{proof}
    We know from \cite[Proposition 6.9]{kuznetsov_shinder} and \cite[Proposition 5.4]{goodbody} that $\mathcal{A}=\mathsf{per}(A)$ is a reflexive dg category and since $\mathsf{D}(\mathsf{D}_{\mathsf{fd}}(A))/\mathsf{D}(A)\simeq \mathsf{D}(\mathsf{D}_{\mathsf{fd}}(\mathcal{A}))/\mathsf{D}(\mathcal{A})$, it follows by Proposition \ref{result of appendix} that $(\mathcal{T}_{\mathsf{ac}})^{\mathsf{b}}_c=0$. Consequently, $\mathcal{T}_{\mathsf{ac}}$ is regular if and only if $\mathcal{T}_{\mathsf{ac}}^{\mathsf{c}}=0$, which is equivalent to $\mathsf{D}_{\mathsf{fd}}(A)/\mathsf{per}(A)\simeq0$. The latter, since $A$ is assumed to be proper, is equivalent to the finiteness of global dimension for $A$, see (\ref{implications}). 
\end{proof}

\section{Global dimension under dg algebra homomorphisms}
We establish a result regarding the global dimension of dg algebras under ring homomorphisms, following \cite{efimov_orlov}, for which we agree on the following notation: for a morphism $h\colon A\rightarrow B$ of dg algebras, we denote by $(h^*,h_*)$ the induction-restriction adjunction of derived functors, as below
\[\begin{tikzcd}
\mathsf{D}(B) \arrow[rr, "h_*"] &  & \mathsf{D}(A) \arrow[ll, "h^*=-\otimes_A^{\mathbb{L}}B"', bend right],
\end{tikzcd}\]
and we recall for later use that the functor $h_*$ is conservative (i.e.\ $h_*(x)\cong 0$ implies that $x\cong 0$). When $A$ and $B$ are connective, then $h_*$ is t-exact, where in both categories we consider the canonical t-structures from Recollection \ref{recollection}. Furthermore, we denote by $T_{h}$ the cone of $h$ in the category of $A$-bimodules and by $T_{h}^{\otimes k}\in\mathsf{D}(A^{\mathrm{op}}\otimes_kA)$ the derived tensor product $T_{h}\otimes_A^{\mathbb{L}}\cdots\otimes_A^{\mathbb{L}} T_{h}$ of $k$ copies of $T_{h}$. Before we prove our next result we need the following lemmata.

\begin{lem} \label{projective dimension of tensor product}
    Let $A$ be a connective dg algebra with bounded cohomology and $x$ an $A$-module that lies in $ \mathsf{D}^{\mathsf{b}}(A)\cap\mathsf{D}^{\leq 0}(A)$. Then for any $A$-bimodule $y$, the inequality $\pd_Ax\otimes_A^{\mathbb{L}}y\leq \pd_Ax+\pd_Ay$ holds. 
\end{lem}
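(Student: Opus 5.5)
We may assume that $d=\pd_Ax$ and $e=\pd_Ay$ are both finite, since otherwise there is nothing to prove. Here $\pd_Ay$ is computed for $y$ viewed as a right $A$-module. The idea is to build $x$ from finitely many shifted coproducts of $A$ and then see what happens after tensoring with $y$.

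\emph{Step 1: decompose $x$.} Since $x\in\mathsf{D}^{\mathsf{b}}(A)\cap\mathsf{D}^{\leq 0}(A)$, Corollary~\ref{characterisation of projective dimension for dg alg} applies and gives
\[
x\in\Add A\ast\Add A[1]\ast\cdots\ast\Add A[d].
\]
Concretely, there is a finite sequence of triangles building $x$ from objects $P_i[i]$ with $P_i\in\Add A$ and $0\leq i\leq d$. (If $d<0$, then $\mathsf{Hom}(x,\mathcal{H}[\geq 0])=0$, and the bounded t-structure forces $x=0$, so the claim is trivial.)

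\emph{Step 2: apply the tensor product.} Apply the triangle functor $-\otimes^{\mathbb{L}}_Ay\colon\mathsf{D}(A)\to\mathsf{D}(A)$. It commutes with coproducts, summands and shifts, so $x\otimes^{\mathbb{L}}_Ay$ lies in
\[
\Add(y)\ast\Add(y)[1]\ast\cdots\ast\Add(y)[d],
\]
where $\Add(y)$ means summands of coproducts of $y$ viewed as a right $A$-module.

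\emph{Step 3: bound each piece.} By iterating Lemma~\ref{bound of projective dimensions in a triangle}, it suffices to show that $\pd_A z[i]\leq i+e$ for every $z\in\Add(y)$ and $0\leq i\leq d$.

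First, shifts behave as expected: $\mathsf{Hom}(z[i],h[m])=\mathsf{Hom}(z,h[m-i])$, so $\pd_A z[i]=\pd_Az+i$.

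Second, passing to coproducts does not increase projective dimension. We have $\mathsf{Hom}(\oplus y,h[m])=\prod\mathsf{Hom}(y,h[m])$, which vanishes for $m>e$. Summands inherit this vanishing.

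Hence $\pd_A z[i]\leq e+i\leq d+e$. Combining this with the triangles from Step 2 gives $\pd_A(x\otimes^{\mathbb{L}}_Ay)\leq d+e$.

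\emph{Main obstacle.} The one step needing care is Step 1. We must make sure the filtration coming from the $\ast$-decomposition is finite, so that applying Lemma~\ref{bound of projective dimensions in a triangle} finitely many times is legitimate. We also use that $-\otimes^{\mathbb{L}}_Ay$ sends $\Add A$ into $\Add(y)$. Both facts are immediate: $A\otimes^{\mathbb{L}}_Ay\cong y$, and the derived tensor product preserves coproducts. No hypothesis on $y$ beyond its being a bimodule is needed.
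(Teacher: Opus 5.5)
Your proposal is correct and follows the paper's proof essentially verbatim. You use Corollary~\ref{characterisation of projective dimension for dg alg} to place $x$ in $\Add A\ast\Add A[1]\ast\cdots\ast\Add A[d]$, push this through the coproduct-preserving triangle functor $-\otimes^{\mathbb{L}}_Ay$, and combine $\pd_A y[m]=\pd_A y+m$ with Lemma~\ref{bound of projective dimensions in a triangle}; your extra remarks (that $\Add y$ does not raise projective dimension, and that $d<0$ forces $x=0$) only spell out what the paper leaves implicit.
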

\begin{proof}
    Obviously the claim holds when $\pd_Ax=\infty$, so we assume that $\pd_Ax=n<\infty$. Since $x$ is assumed to belong in the intersection $ \mathsf{D}^{\mathsf{b}}(A)\cap \mathsf{D}^{\leq 0}(A)$, it follows from Corollary \ref{characterisation of projective dimension for dg alg} that $x$ belongs to $\Add A\ast \Add A[1]\ast \cdots \ast \Add A[n]$. As a consequence, $x\otimes_A^{\mathbb{L}}y$ lies in $\Add y\ast \Add y[1]\ast\cdots\ast \Add y[n]\subseteq \mathsf{D}(A)$. By definition of the projective dimension, we have that $\pd_Ay[m]= \pd_Ay+m$ for any integer $m$ and therefore, using Lemma \ref{bound of projective dimensions in a triangle}, it follows that $\pd_A x\otimes_A^{\mathbb{L}}y\leq \pd_Ay+n$. 
\end{proof}

\begin{lem} \label{octahedral}
    Let $g\colon x\rightarrow y$ and $f\colon y\rightarrow x$ be morphisms in a triangulated category $\mathcal{T}$ with $fg\simeq \mathsf{Id}_x$. Then $\mathsf{cone}(f)\cong \mathsf{cone}(g)[1]$. 
\end{lem}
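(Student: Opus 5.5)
The plan is to apply the octahedral axiom to the composable pair $g\colon x\to y$ and $f\colon y\to x$, whose composite is $fg\simeq \mathsf{Id}_x$. Choose triangles
\[
x\xrightarrow{g} y\rightarrow \mathsf{cone}(g)\rightarrow x[1],\qquad y\xrightarrow{f} x\rightarrow \mathsf{cone}(f)\rightarrow y[1],
\]
together with the triangle $x\xrightarrow{\mathsf{Id}_x} x\rightarrow 0\rightarrow x[1]$ for the composite. The cone of the composite is $0$, because the cone of an isomorphism vanishes.

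The octahedral axiom then gives a distinguished triangle relating the three cones:
\[
\mathsf{cone}(g)\rightarrow \mathsf{cone}(fg)\rightarrow \mathsf{cone}(f)\rightarrow \mathsf{cone}(g)[1].
\]
Since $\mathsf{cone}(fg)\cong 0$, this triangle reads
\[
\mathsf{cone}(g)\rightarrow 0\rightarrow \mathsf{cone}(f)\xrightarrow{w}\mathsf{cone}(g)[1].
\]
In any triangle with a zero vertex, the remaining map is an isomorphism. This follows by applying $\mathsf{Hom}_{\mathcal{T}}(t,-)$ for arbitrary $t$ and using the Yoneda lemma, or by rotating the triangle to $0\to \mathsf{cone}(f)\xrightarrow{w}\mathsf{cone}(g)[1]\to 0[1]$. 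Hence $w$ gives $\mathsf{cone}(f)\cong \mathsf{cone}(g)[1]$.

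The only point needing care is the exact form of the octahedral triangle. For composable $u\colon a\to b$ and $v\colon b\to c$, the axiom yields a triangle $\mathsf{cone}(u)\to\mathsf{cone}(vu)\to\mathsf{cone}(v)\to\mathsf{cone}(u)[1]$. Here $u=g$, $v=f$ and $vu=fg$. Nothing beyond this ordering and the vanishing of $\mathsf{cone}(\mathsf{Id}_x)$ is required.

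An alternative is to note directly that $g$ is a split monomorphism, so $y\cong x\oplus \mathsf{cone}(g)$ and $f$ corresponds to the projection onto $x$, whose cone is $\mathsf{cone}(g)[1]$. However, the octahedral route is cleaner and avoids having to identify the splitting.
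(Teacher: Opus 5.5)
Your proposal is correct and is the paper's own argument: the octahedral axiom for the composable pair $g$, $f$ gives the triangle $\mathsf{cone}(g)\rightarrow \mathsf{cone}(fg)\rightarrow \mathsf{cone}(f)\rightarrow \mathsf{cone}(g)[1]$. Since $fg\simeq \mathsf{Id}_x$ forces $\mathsf{cone}(fg)\cong 0$, the connecting map $\mathsf{cone}(f)\rightarrow\mathsf{cone}(g)[1]$ is an isomorphism.
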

\begin{proof}
    As a consequence of the octahedral axiom, there exists a triangle 
    \[
    \mathsf{cone}(g)\rightarrow \mathsf{cone}(f g)\rightarrow \mathsf{cone}(f)\rightarrow \mathsf{cone}(g)[1]
    \]
    and since $fg\simeq \mathsf{Id}_x$, the claim follows.
\end{proof}

The following observation is used in \cite{efimov_orlov} -- we give a proof for convenience. 
\begin{lem} \label{isomorphism}
    Let $h\colon A\rightarrow B$ be a morphism of dg algebras. For a $B$-module $x$, set $x_0=x$ and define inductively $x_n$ to be the cone of $h^*h_*(x_{n-1})\rightarrow x_{n-1}$ in $\mathsf{D}(B)$. Then $h_*(x_n)\cong h_*(x)\otimes_A^{\mathbb{L}}T_h^{\otimes n}[n]$.
\end{lem}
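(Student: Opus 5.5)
Throughout, $\mathsf{cone}(u)$ denotes the cone $c$ in a triangle $a\xrightarrow{u}b\rightarrow c\rightarrow a[1]$. We argue by induction on $n$; the case $n=0$ is trivial. The key is to understand the functor $h_*$ of a counit cone. For any $B$-module $z$ we write the counit $\varepsilon_z\colon h^*h_*(z)\rightarrow z$ and apply $h_*$. Then
\[
h_*(h^*h_*(z))\cong h_*(z)\otimes_A^{\mathbb{L}}B,
\]
where $B$ is viewed as an $A$-bimodule. Under this identification, $h_*(\varepsilon_z)$ is the map $h_*(z)\otimes_A^{\mathbb{L}}B\rightarrow h_*(z)$ induced by the $B$-action on $z$.

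This map has a section. Let $\eta$ be the unit of the adjunction $(h^*,h_*)$ evaluated at $h_*(z)$. Then $\eta$ is the map $h_*(z)\cong h_*(z)\otimes_A^{\mathbb{L}}A\rightarrow h_*(z)\otimes_A^{\mathbb{L}}B$ given by $\mathsf{Id}\otimes h$. The triangle identity gives $h_*(\varepsilon_z)\circ\eta=\mathsf{Id}_{h_*(z)}$. We apply Lemma \ref{octahedral} with $g=\eta$ and $f=h_*(\varepsilon_z)$. Since $h_*$ is a triangle functor, this yields
\[
h_*(\mathsf{cone}(\varepsilon_z))\cong \mathsf{cone}(h_*\varepsilon_z)\cong \mathsf{cone}(\eta)[1].
\]

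Next we compute $\mathsf{cone}(\eta)$. Tensoring the bimodule triangle $A\xrightarrow{h}B\rightarrow T_h\rightarrow A[1]$ with $h_*(z)$ over $A$ gives a triangle
\[
h_*(z)\xrightarrow{\eta}h_*(z)\otimes_A^{\mathbb{L}}B\rightarrow h_*(z)\otimes_A^{\mathbb{L}}T_h\rightarrow h_*(z)[1].
\]
Hence $\mathsf{cone}(\eta)\cong h_*(z)\otimes^{\mathbb{L}}_AT_h$, and therefore
\[
h_*(\mathsf{cone}(\varepsilon_z))\cong h_*(z)\otimes_A^{\mathbb{L}}T_h[1].
\]

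For the inductive step, take $z=x_{n-1}$ and use the induction hypothesis $h_*(x_{n-1})\cong h_*(x)\otimes_A^{\mathbb{L}}T_h^{\otimes(n-1)}[n-1]$. Associativity of the derived tensor product then gives
\[
h_*(x_n)\cong h_*(x)\otimes_A^{\mathbb{L}}T_h^{\otimes n}[n].
\]

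The main obstacle is the identification of $h_*(\varepsilon_z)$ and $\eta$ with the explicit tensor maps, so that the cone of $\eta$ really is the image of the bimodule triangle. To secure this, we work with K-projective (or cofibrant) resolutions so that everything is computed at the dg level. We then check directly that $\mathsf{Id}\otimes h$ and the action map compose to the identity on $h_*(z)$. Done this way, we do not need to invoke the triangle identities abstractly.
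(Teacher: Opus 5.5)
Your proposal is correct and follows essentially the same route as the paper: tensor the triangle $A\xrightarrow{h}B\rightarrow T_h\rightarrow A[1]$ with $h_*(z)$, use that $h_*(\varepsilon_z)\circ(1\otimes h)=\mathsf{Id}_{h_*(z)}$, apply Lemma \ref{octahedral} to get $h_*(\mathsf{cone}(\varepsilon_z))\cong h_*(z)\otimes_A^{\mathbb{L}}T_h[1]$, and then induct. The only difference is that you name $1\otimes h$ as the unit of the adjunction and justify the splitting by the triangle identity, which the paper takes for granted.
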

\begin{proof}
    We first apply $h_*(x)\otimes^{\mathbb{L}}_A-$ to the triangle $A\xrightarrow{h}B\rightarrow T_h\rightarrow A[1]$ in $\mathsf{D}(A^{\mathrm{op}}\otimes_kA)$ to get a triangle 
    \[
    h_*(x)\xrightarrow{1\otimes h} h_*(x)\otimes^{\mathbb{L}}_AB\rightarrow h_*(x)\otimes^{\mathbb{L}}_AT_h\rightarrow h_*(x)[1]
    \]
    in $\mathsf{D}(A)$. We have that $h_*(\epsilon_x)\circ (1\otimes h)\simeq \mathsf{Id}_{h_*(x)}$, where $\epsilon$ denotes the counit of the adjunction $(h^*,h_*)$ and therefore, we derive from Lemma \ref{octahedral} that $h_*(x_1)\cong h_*(x)\otimes^{\mathbb{L}}_AT_h[1]$. Inductively we deduce the isomorphisms $h_*(x_n)\cong h_*(x_{n-1})\otimes_A^{\mathbb{L}}T_h[1]\cong (h_*(x)\otimes_A^{\mathbb{L}} T_h^{\otimes n-1}[n-1])\otimes_A^{\mathbb{L}}T_h[1]\cong h_*(x)\otimes^{\mathbb{L}}_AT_h^{\otimes n}[n]$.
\end{proof}

The following is a version of \cite[Theorem 8]{efimov_orlov} for global dimension. 

\begin{thm} \label{dg algebra homomorphism}
    Let $h\colon A\rightarrow B$ be a homomorphism of connective dg algebras with bounded cohomology and assume that $T_h^{\otimes n}=0$ for some $n\geq 1$. If $\pd_AT_h\leq d$, then the inequality
    \[
    \gd B\leq \gd A+ (n-1)(d+1)
    \]
    holds. In particular, if $\gd A$ is finite then $\gd B$ is finite. 
\end{thm}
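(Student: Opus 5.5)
Assume $\gd A=g<\infty$; otherwise there is nothing to prove. Fix $x$ in the heart of $\mathsf{D}(B)$, i.e.\ $x\in\Mod H^0(B)$, and use the iterated cones $x_0=x$, $x_j=\mathsf{cone}(h^*h_*(x_{j-1})\to x_{j-1})$ from Lemma \ref{isomorphism}. By that lemma, $h_*(x_n)\cong h_*(x)\otimes_A^{\mathbb{L}}T_h^{\otimes n}[n]=0$. Since $h_*$ is conservative, $x_n\cong 0$. Reading the defining triangles $h^*h_*(x_{j-1})\to x_{j-1}\to x_j\to$ backwards from $x_n=0$, Lemma \ref{bound of projective dimensions in a triangle} (applied in $\mathsf{D}(B)$ relative to the heart $\Mod H^0(B)$) gives
\[
\pd_B x_{j-1}\leq \max\{\pd_B h^*h_*(x_{j-1}),\ \pd_B x_j[-1]\}=\max\{\pd_B h^*h_*(x_{j-1}),\ \pd_B x_j-1\}.
\]
Induction down from $j=n$ then yields $\pd_Bx\leq \max_{0\leq j\leq n-1}\pd_B h^*h_*(x_j)$. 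It therefore suffices to bound $\pd_B h^*h_*(x_j)$ by $g+(n-1)(d+1)$; in fact I aim for $g+j(d+1)$.

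\textbf{Step 1: $\pd_B h^*(z)\leq \pd_A z$.} For $z\in\mathsf{D}(A)$ and $y$ in the heart of $\mathsf{D}(B)$, adjunction gives $\mathsf{Hom}_B(h^*z,y[m])\cong\mathsf{Hom}_A(z,h_*y[m])$. Because $h_*$ is t-exact, $h_*y$ lies in the heart of $\mathsf{D}(A)$, and the inequality follows.

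\textbf{Step 2: bound $\pd_A h_*(x_j)$.} By Lemma \ref{isomorphism}, $h_*(x_j)\cong h_*(x)\otimes_A^{\mathbb{L}}T_h^{\otimes j}[j]$. The module $h_*(x)$ lies in the heart, so $\pd_A h_*(x)\leq g$, and it belongs to $\mathsf{D}^{\mathsf{b}}(A)\cap\mathsf{D}^{\leq 0}(A)$. Write $h_*(x)\otimes T_h^{\otimes j}=(h_*(x)\otimes T_h^{\otimes j-1})\otimes T_h$ and apply Lemma \ref{projective dimension of tensor product} repeatedly; each factor $T_h$ contributes at most $d$. This gives
\[
\pd_A h_*(x_j)\leq g+jd+j=g+j(d+1),
\]
which is at most $g+(n-1)(d+1)$ for $j\leq n-1$.

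\textbf{Main obstacle.} The repeated use of Lemma \ref{projective dimension of tensor product} requires each intermediate module $h_*(x)\otimes^{\mathbb{L}}_AT_h^{\otimes i}$ to lie in $\mathsf{D}^{\mathsf{b}}(A)\cap\mathsf{D}^{\leq 0}(A)$.
\begin{itemize}
\item Connectivity: $T_h$ is the cone of a map of connective bimodules, so $T_h\in\mathsf{D}^{\leq -1}$ up to $H^{-1}$ coming from $H^0$. The derived tensor product of connective objects over a connective dg algebra is connective, so this part should be fine.
\item Boundedness: this is the delicate part. Since $x_i\in\mathsf{D}(B)$ and $h_*$ preserves bounded cohomology, I would try to show inductively that each $x_i$ lies in $\mathsf{D}^{\mathsf{b}}(B)$. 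The catch is that $h^*$ need not preserve boundedness in general. Here the hypothesis $\pd_AT_h\leq d<\infty$ together with the finite filtration of Corollary \ref{characterisation of projective dimension for dg alg} should help. The inductive claim I would try is: if $w\in\mathsf{D}^{\mathsf{b}}\cap\mathsf{D}^{\leq 0}$ has finite projective dimension, then $w\in\Add A\ast\cdots\ast\Add A[m]$. Hence $w\otimes T_h$ lies in a finite extension of shifts of $\Add T_h$, which are bounded because $A$, $B$, and therefore $T_h$ have bounded cohomology and coproducts preserve cohomological bounds.
\end{itemize}
This propagates boundedness along the induction.
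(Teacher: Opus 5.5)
Your proof is correct and is essentially the paper's argument. It uses the same iterated cones $x_j$, the vanishing of $x_n$ via Lemma \ref{isomorphism} and conservativity of $h_*$, and the adjunction plus t-exactness bound $\pd_B h^*h_*(x_j)\leq \pd_A h_*(x_j)$. It also uses Lemma \ref{projective dimension of tensor product} inductively to get $\pd_A h_*(x_j)\leq \gd A+j(d+1)$.

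\textbf{A wrong intermediate inequality.} One displayed inequality is false, though harmlessly so. The sequence $\mathsf{Hom}(x_j,y[m])\to\mathsf{Hom}(x_{j-1},y[m])\to\mathsf{Hom}(h^*h_*(x_{j-1}),y[m])$ is exact. So the triangle only gives $\pd_B x_{j-1}\leq\max\{\pd_B h^*h_*(x_{j-1}),\pd_B x_j\}$, with no $-1$.

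Your version fails already for $kQ_0\to kQ$ with $Q=1\to 2$ and $x$ the non-projective simple. There $h^*h_*(x)$ is the projective cover of $x$, and $x_1$ is a nonzero projective shifted by $[1]$. Your bound would then force $\pd_B x\leq 0$, which is false. However, the consequence you actually use, $\pd_B x\leq\max_{j}\pd_B h^*h_*(x_j)$, does follow from the correct inequality, so nothing downstream breaks.

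\textbf{The boundedness obstacle.} Your argument for the intermediate modules $h_*(x)\otimes^{\mathbb{L}}_AT_h^{\otimes i}$ is valid: finite projective dimension gives a filtration by $\Add A$, hence a filtration by $\Add T_h$ after tensoring, hence bounded cohomology. But the obstacle only arises because you bracket the tensor product as $(h_*(x)\otimes T_h^{\otimes i-1})\otimes T_h$.

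The paper records $T_h\in\mathsf{D}^{\mathsf{b}}(A)\cap\mathsf{D}^{\leq 0}(A)$. This lets you apply Lemma \ref{projective dimension of tensor product} with $T_h$ as the left factor to get $\pd_A T_h^{\otimes k}\leq kd$. You then apply it once more with $h_*(x)$, which lies in the heart, as the left factor. This route never needs boundedness of intermediate products.
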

\begin{proof}
    Let $x$ be a $B$-module, set $x_0=x$ and define inductively $x_n$ to be the cone of the counit $h^*h_*(x_{n-1})\rightarrow x_{n-1}$ in $\mathsf{D}(B)$. For every $k\geq 0$, $m\in\mathbb{Z}$, and any $y\in B^{\perp_{\neq 0}}$ we have 
    \begin{align*}
        \mathsf{Hom}_{\mathsf{D}(B)}(h^*h_*(x_k),y[m]) &\cong \mathsf{Hom}_{\mathsf{D}(A)}(h_*(x_k),h_*(y)[m]) \\ 
        & \cong \mathsf{Hom}_{\mathsf{D}(A)}(h_*(x)\otimes^{\mathbb{L}}_AT_h^{\otimes k}[k],h_*(y)[m]) \ \ \ \ \  \text{Lemma }\ref{isomorphism} \\
        & \cong \mathsf{Hom}_{\mathsf{D}(A)}(h_*(x)\otimes^{\mathbb{L}}_AT_h^{\otimes k},h_*(y)[m-k]).
    \end{align*}
    By using the triangle $A\rightarrow B\rightarrow T_h\rightarrow A[1]$ we see that 
    $\mathsf{Hom}_{\mathsf{D}(A)}(A,T_h[>\!0])=0$ and therefore $T_h$ belongs to $\mathsf{D}^{\mathsf{b}}(A)\cap \mathsf{D}^{\leq 0}(A)$. If we assume that $x$ belongs to $B^{\perp_{\neq 0}}$, then by using Lemma \ref{projective dimension of tensor product} we see inductively that $\pd_A h_*(x)\otimes^{\mathbb{L}}_AT_h^{\otimes k}\leq \pd_A h_*(x)+kd$. In view of the isomorphisms above, it follows that
    \begin{equation} \label{inequality}
        \pd_B h^*h_*(x_k)\leq \pd_A h_*(x)+kd+k.
    \end{equation}
    Moreover, since $T_h^{\otimes n}=0$, it follows from Lemma \ref{isomorphism} that $h_*(x_n)\cong 0$ and as a consequence $x_n\cong 0$. We may now consider the triangles 
    \[
    h^*h_*(x)\rightarrow x\rightarrow x_1\rightarrow  h^*h_*(x)[1], \dots, h^*h_*(x_{n-1})\xrightarrow{\cong} x_{n-1}\rightarrow 0\rightarrow h^*h_*(x_{n-1})[1].
    \]
    By the first triangle together with (\ref{inequality}) and Lemma \ref{bound of projective dimensions in a triangle} it follows that 
    \[
    \pd_{B}x\leq \mathsf{max}\{\pd_Bx_1,\pd_{A}h_*(x)\}. 
    \]
    For the same reason, and by using the triangle $h^*h_*(x_1)\rightarrow x_1\rightarrow x_2\rightarrow h^*h_*(x_1)[1]$, it follows that 
    \[
    \pd_Bx_1\leq \mathsf{max}\{\pd_Bx_2,\pd_Ah_*(x)+d+1\}.
    \]
    We iterate this argument and after finitely many steps (due to the isomorphism $h^*h_*(x_{n-1})\cong x_{n-1}$), we arrive to the inequality $\pd_Bx\leq \pd_Ah_*(x)+(n-1)(d+1)$, which implies the claim. 
\end{proof}

\begin{prop} \label{acyclic quiver}
    Let $A=kQ/I$ be a bound quiver algebra where $Q$ is a finite quiver, $I$ an admissible ideal and $k$ a field. Assume that $kQ/I$ is given a connective dg algebra structure concentrated in degrees $-d$ to $0$. If $Q$ has no oriented cycles, then 
    \[
    \gd kQ/I\leq l(d+1),
    \]
    where $l$ is the maximal length of a path in $Q$. 
\end{prop}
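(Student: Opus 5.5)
We would apply Theorem~\ref{dg algebra homomorphism} to the inclusion of the semisimple part. Let $Q_0$ be the vertex set and $S=\prod_{v\in Q_0}k e_v$, viewed as a dg algebra concentrated in degree $0$ with zero differential. We would first check that $S\subseteq A$ is a sub-dg algebra. The idempotents $e_v$ lie in $A^0$, and $d(e_v)$ has degree $1$, so $d(e_v)=0$ because $A$ is concentrated in degrees $-d$ to $0$. Hence the inclusion $h\colon S\rightarrow A$ is a homomorphism of connective dg algebras with bounded cohomology. Since $S$ is semisimple, every object of the heart $\Mod S$ is projective, so $\gd S=0$.

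Next we would bound $\pd_ST_h$. The cone $T_h$ is quasi-isomorphic to the $S$-bimodule $A/S$, which is spanned by the classes of paths of length $\geq 1$, and its cohomology lies in degrees $-d$ to $0$. Over the semisimple algebra $S$, every dg module is quasi-isomorphic to the sum of its shifted cohomologies, and $\pd_S z[m]=m$ for $z\in\Mod S$. Therefore $\pd_S T_h\leq d$.

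The key step is nilpotence: we want $T_h^{\otimes (l+1)}=0$. Since $S$ is semisimple, the derived tensor product over $S$ is the ordinary tensor product, so $T_h^{\otimes n}\simeq (A/S)\otimes_S\cdots\otimes_S(A/S)$. Write $J$ for the ideal spanned by paths of positive length. Every element of $A/S\cong J$, in every degree, is a combination of paths of length $\geq 1$. An elementary tensor $p_1\otimes_S\cdots\otimes_S p_n$ of paths is nonzero only if $p_i e_v=p_i\neq 0$ and $e_v p_{i+1}\neq 0$ for the matching vertex $v$, that is, only if the paths are composable. In that case they concatenate to a path of length $\geq n$ in $Q$. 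Since $Q$ has no oriented cycles, no path has length greater than $l$, so this tensor power vanishes for $n=l+1$.

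The main obstacle is this nilpotence step. One must work with the tensor product over $S$ of the vector spaces spanned by paths, not with products inside $A$, and confirm that composability of paths is enforced by the idempotents $e_v$ (the space $e_v k e_w$ is zero for $v\neq w$). One must also confirm that the differential of $A$ does not obstruct identifying $T_h$ with $A/S$ as a complex. This identification holds because $h$ is injective, so its cone is quasi-isomorphic to its cokernel.

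With $n=l+1$, $\gd S=0$ and $\pd_S T_h\leq d$, Theorem~\ref{dg algebra homomorphism} gives $\gd kQ/I\leq 0+l(d+1)$, which is the claimed bound.
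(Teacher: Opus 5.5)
Your proposal is correct and follows the paper's proof. Both apply Theorem~\ref{dg algebra homomorphism} to the inclusion $kQ_0\to kQ/I$, using $\gd kQ_0=0$, $\pd_{kQ_0}T_h\leq d$ (via semisimplicity and the degree range) and $T_h^{\otimes(l+1)}=0$. The only difference is that you prove the nilpotence of the tensor powers directly, by showing that tensoring over the idempotents forces paths to be composable, whereas the paper cites a result of Beligiannis for this step.
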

\begin{proof}
    Consider the canonical morphism of dg algebras $h\colon kQ_0\rightarrow kQ/I$, where $kQ_0$ is regarded as a dg algebra in degree zero (this is a homomorphism of dg algebras by the assumption of connectivity for $kQ/I$). Since $h$ is injective, it follows that $T_h$ is isomorphic, in $\mathsf{D}(kQ_0)$, to the cokernel of $h$ and its underlying $kQ_0$-bimodule is the kernel of the epimorphism $kQ/I\rightarrow kQ_0$, say $M$. In particular, we have that $T_h^{\otimes l+1}=0$ (since the same holds for $M$, see for instance \cite[Corollary 7.12]{beligiannis}) and $\pd_{kQ_0}T_h\leq d$ (since $kQ_0$ is semisimple and $T_h$ is $kQ_0$-dg-bimodule in degrees $-d$ to $0$), so the result follows from  Theorem \ref{dg algebra homomorphism}. 
\end{proof}

\begin{cor}
    Let $A=kQ/I$ be a bound quiver algebra over a field $k$ that is given a connective dg algebra structure for which $\gd kQ/I= n$. Then there is a path in $Q$ with length at least $ \lfloor\frac{n}{d+1}\rfloor$, where $d$ is the maximum integer such that $A^{-d}\neq 0$. 
\end{cor}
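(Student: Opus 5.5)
This is the contrapositive of Proposition \ref{acyclic quiver}, but that proposition assumes $Q$ has no oriented cycles. The corollary as stated does not, so I first deal with that hypothesis. If $Q$ had an oriented cycle, paths in $Q$ would have arbitrarily large length. Here ``path'' means a path in the quiver $Q$, not a nonzero path in $kQ/I$, so the conclusion would then hold trivially. Accordingly I split into two cases.

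\emph{Case 1: $Q$ has an oriented cycle.} Going around the cycle repeatedly gives paths of every length. In particular there is a path of length at least $\lfloor\frac{n}{d+1}\rfloor$, and nothing more needs to be proved.

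\emph{Case 2: $Q$ is acyclic.} Then there is a maximal path length $l$, and $A$ is finite dimensional. The dg structure lives in degrees $-d$ to $0$: connectivity means $A^{>0}=0$, and $d$ is by definition the largest integer with $A^{-d}\neq0$. So Proposition \ref{acyclic quiver} applies and gives
\[
n=\gd kQ/I\leq l(d+1).
\]
Dividing by $d+1>0$ gives $l\geq \frac{n}{d+1}\geq\lfloor\frac{n}{d+1}\rfloor$. By the definition of $l$, some path in $Q$ has length exactly $l$, which is the required path.

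The only point needing care is the hypothesis mismatch with Proposition \ref{acyclic quiver}, which the case split handles. A minor side issue is whether $\gd kQ/I=n$ implicitly requires $n$ to be finite. If $n$ were infinite, Case 2 could not occur, because Proposition \ref{acyclic quiver} would force $\gd kQ/I$ to be finite. So an infinite $n$ forces an oriented cycle, and Case 1 then supplies arbitrarily long paths, which matches any reasonable reading of the statement. With $n$ finite, as the notation suggests, the argument above is complete.
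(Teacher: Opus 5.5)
Your proof is correct and is essentially the paper's argument. The paper argues by contradiction: assuming every path in $Q$ has length at most $\lfloor\frac{n}{d+1}\rfloor-1$ rules out oriented cycles, and Proposition~\ref{acyclic quiver} then forces $\gd A<n$. You make the same dichotomy (oriented cycle versus acyclic) explicit as a case split and argue directly instead.
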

\begin{proof}
    Assume that every path in $Q$ has length at most $\lfloor\frac{n}{d+1}\rfloor-1$. Then, in particular, $Q$ does not have oriented cycles and therefore it follows by Proposition \ref{acyclic quiver} that 
    \[
    \gd A< \frac{n}{d+1}(d+1)=n,
    \]
    a contradiction. 
\end{proof}

For $d=0$ the above is shown in \cite[Proposition 3.3]{happel_zacharia} and, in simple terms, it says that if $\gd kQ/I$ is large, then the quiver should have paths with large enough length. When $d\neq 0$, this behaviour is no longer expected; consider the quiver $Q=\begin{tikzcd}
	1 & 2
	\arrow["\alpha", from=1-1, to=1-2]
\end{tikzcd}$ and view its path algebra as a dg algebra with $\mathsf{deg}(\alpha)=-d$ and trivial differential. Then, by \cite[Corollary 3.10]{tomonaga}, we have $\gd kQ=  d+1$.

\end{document}